\begin{document}

\newtheorem{theorem}{Theorem}
\newtheorem{lemma}[theorem]{Lemma}
\newtheorem{cor}[theorem]{Corollary}
\newtheorem{prop}[theorem]{Proposition}

\newcommand{\comm}[1]{\marginpar{%
\vskip-\baselineskip 
\raggedright\footnotesize
\itshape\hrule\smallskip#1\par\smallskip\hrule}}

\def\cA{{\mathcal A}}
\def\cB{{\mathcal B}}
\def\cC{{\mathcal C}}
\def\cD{{\mathcal D}}
\def\cE{{\mathcal E}}
\def\cF{{\mathcal F}}
\def\cG{{\mathcal G}}
\def\cH{{\mathcal H}}
\def\cI{{\mathcal I}}
\def\cJ{{\mathcal J}}
\def\cK{{\mathcal K}}
\def\cL{{\mathcal L}}
\def\cM{{\mathcal M}}
\def\cN{{\mathcal N}}
\def\cO{{\mathcal O}}
\def\cP{{\mathcal P}}
\def\cQ{{\mathcal Q}}
\def\cR{{\mathcal R}}
\def\cS{{\mathcal S}}
\def\cT{{\mathcal T}}
\def\cU{{\mathcal U}}
\def\cV{{\mathcal V}}
\def\cW{{\mathcal W}}
\def\cX{{\mathcal X}}
\def\cY{{\mathcal Y}}
\def\cZ{{\mathcal Z}}

\def\C{\mathbb{C}}
\def\F{\mathbb{F}}
\def\K{\mathbb{K}}
\def\Z{\mathbb{Z}}
\def\R{\mathbb{R}}
\def\Q{\mathbb{Q}}
\def\N{\mathbb{N}}

\def\({\left(}
\def\){\right)}
\def\[{\left[}
\def\]{\right]}
\def\<{\langle}
\def\>{\rangle}

\def\e{e}

\def\eq{\e_q}
\def\eT{\e_T}

\def\fl#1{\left\lfloor#1\right\rfloor}
\def\rf#1{\left\lceil#1\right\rceil}
\def\mand{\qquad\mbox{and}\qquad}

\title{\bf Some mixed character sums}

\date{ }
\author{ 
{\sc   Bryce Kerr } \\
{Department of Pure Mathematics} \\
{University of New South Wales} \\
{Sydney, NSW 2052, Australia} \\
{\tt  b.kerr@student.unsw.edu.au}}
\date{ }

\date{}

\maketitle
\begin{abstract}
In this paper we consider a variety of mixed character sums. In particular we extend a bound of Heath-Brown and Pierce to the case of squarefree modulus, improve on a result of Chang for mixed sums in finite fields, we show in certain circumstances we may improve on some results of Pierce for multidimensional mixed sums and we extend a bound for character sums with products of linear forms to the setting of mixed sums. 
\end{abstract}


\section{Introduction}
Let $q$ be an integer, $\chi$ be a primitive multiplicative character$\mod q$ and let $F$ be a polynomial of degree $d$ with real coefficients. We consider a variety of character sums mixed with terms of the form $e^{2\pi i F(n)}$. The simplest example of such sums are given by
\begin{equation}
\label{eq:Sdef}
\sum_{M<n\le N+M}\chi(n)e^{2\pi iF(n)}.
\end{equation}
For $q$ prime, these sums were first studied by Enflo~\cite{Enflo} who outlines an argument which gives the bound
$$|S(\chi,F)|\le  N^{1-1/2^dr}q^{(r+1)/2^{d+2}r^2},$$
for integer $r\ge 1$ and $N\le H^{3/4+1/4r}$, which is nontrivial provided $H>q^{1/4+o(1)}$  (see~\cite[Theorem~1.1]{HBP}). This bound was improved by Chang~\cite{Chang} who showed that
\begin{align}
\label{eq:Chang1}
|S(\chi,F)|\ll  Nq^{-\varepsilon},
\end{align}
when $N\ge q^{1/4+\delta}$ and
$$\varepsilon=\frac{\delta^2}{4(1+2\delta)(2+(d+1)^2}.$$
In the same paper, Chang also considered a generalisation of the sums~\eqref{eq:Sdef} to arbitrary finite fields. More specifically, let $q$ be prime, $n$ an integer, $\chi$  and $\psi$ multiplicative and additive characters of  $\mathbb{F}_{q^{n}}$ respectivley and let $F$ be a polynomial of degree $d$ with coefficients in $\mathbb{F}_{q^{n}}$. Let $\omega_1,\dots,\omega_n$ be a basis for $\mathbb{F}_{q^{n}}$ over $\mathbb{F}_{q}$ and let $\cB$ denote the box
$$\cB=\{ \omega_1h_1+\dots+\omega_nh_n : 1\le h_i \le H\}.$$
 Then Chang showed that
\begin{equation}
\label{eq:Chang2}
\sum_{h\in \cB}\chi(h)\psi(F(h))\ll H^{n}q^{-\varepsilon},
\end{equation}
when $H\ge q^{1/4+\delta}$ and $$\varepsilon=\frac{\delta^2n}{4(1+2\delta)(2n+(d+1)^2)}.$$
Recently, Heath-Brown and Pierce have improved on the bound of Chang~\eqref{eq:Chang1} for prime fields showing that, subject to some conditions on $r$ related to Vinogradov's mean value theorem, we have
\begin{equation}
\label{eq:HBP}
\left|\sum_{M<n\le M+N}\chi(n)e^{2\pi i F(n)}\right|\le N^{1-1/r}q^{(r+1-d(d+1)/2)/4r(r-d(d+1)/2)},
\end{equation}
which can be compared directly with the result of Chang by noting that for small $\delta$ and $N\ge q^{1/4+\delta},$ we have
$$ N^{1-1/r}q^{(r+1-d(d+1)/2)/4r(r-d(d+1)/2)}\le Nq^{-\varepsilon},$$
where $\varepsilon$ behaves like (see~\cite[Section~4.2]{HBP})
$$\left(\frac{2\delta}{1+\sqrt{1+2d(d+1)\delta}}\right)^2.$$
Pierce has also considered a multidimensional version of the sums~\eqref{eq:Sdef}. Let $q_1,\dots,q_n$ be primes, $\chi_i$ a multiplicative character$\mod q_i$ and  $F$ a polynomial of degree $d$ in $n$ variables. In~\cite{Pierce}  Pierce has given a number of different bounds for sums of the form
\begin{equation}
\label{eq:Pierce}
\sum_{N_i<h_i\le N_i+H_i}\chi_1(h_1)\dots\chi_n(h_n)e^{2\pi i F(h_1,\dots,h_n)},
\end{equation}
and in the same paper Pierce also mentioned the following problem: Let $L_1,\dots,L_n$ be $n$ linear forms in $n$ variables which are linearly independent$\mod q$ and let $F$ be a polynomial of degree $d$ in $n$ variables. Then consider giving an upper bound for the sums
\begin{equation}
\label{eq:mlm}
\sum_{1\le h_i\le H}\chi(\prod_{j=1}^{n}L_j(h_1,\dots,h_n))e^{2\pi i F(h_1,\dots,h_n)}.
\end{equation}
The  sums~\eqref{eq:mlm} without the factor $e^{2\pi i F(h_1,\dots,h_n)}$ were first considered by Burgess~\cite{Bur3} whose bound was later improved in general by Bourgain and Chang~\cite{BC}. \\

In this paper we consider giving bounds for a variety of mixed character sums. We first consider the problem of extending the bound of Heath-Brown and Pierce~\eqref{eq:HBP} to squarefree modulus. The main obstacle in doing this is bounding the double mean value
$$\int_{0}^{1}\dots \int_{0}^{1}\sum_{\lambda=1}^{q}\left| \sum_{1\le v \le V}\beta_v\chi(\lambda+v)e^{2\pi i (\alpha_1 v+\dots+\alpha_d v^{d})} \right|^{2r}d\alpha_1\dots d\alpha_d,$$
which for the case of prime modulus, as done by Heath-Brown and Pierce~\cite{HBP}, relies on the Weil bounds for complete sums and Vinogradov's mean value theorem. For the case of squarefree modulus, we can use the Chinese remainder theorem, as done by Burgess~\cite{Bur1} for pure sums, so that we may apply the Weil bounds, although there are extra complications in incorporating bounds for Vinogradov's mean value theorem. Doing this we end up with a bound weaker than for prime modulus, although in certain cases we can get something just as sharp, in particular when $q$ does not have many prime factors. \\

We give an improvement on the bound~\eqref{eq:Chang1} of Chang for boxes over finite fields. We deal with the factor $\psi(F(h))$ in a similar fashion to the case of squarefree modulus. Our argument also relies on  Konyagin's bound on the multiplicitive energy of boxes in finite fields~\cite{Kon}, Vinogradov's mean value theorem and the Weil bounds for complete sums.   \\

 We show in certain cases we may improve on the results of Pierce for the sums~\eqref{eq:Pierce}. The argument of Pierce relies on  a multidimensional version of Vinogradov's mean value theorem due to Parsell, Prendiville and Wooley~\cite{PPW}.  Our improvement comes from averaging the sums~\eqref{eq:Pierce} in a suitable way so we end up applying the classical Vinogradov mean value theorem rather than the multidimensional version. Although in order to do this, we need the range of summation in each variable not to get too short and each of the $q_i$ in~\eqref{eq:Pierce} not to be too small, so our result is less general. \\

Finally, we consider the problem mentioned by Pierce in~\cite{Pierce}, of bounding the sums~\eqref{eq:mlm}. We obtain a result almost as strong as Bourgain and Chang~\cite{BC} for the case of pure sums. An essential part of our proof is the bound of Bourgain and Chang on multiplicative energy of systems of linear forms. \\

Our arguments use a different approach to that of Heath-Brown and Pierce~\cite{HBP}. The technique we use to deal with the factor $e^{2\pi i F(n)}$ can be though of an a generalisation of an idea of  Chamizo~\cite{Cham}, who gave a simple proof of the Burgess bound for incomplete Gauss sums, which in our case corresponds to mixed sums of degree 1. We also note that our method is capable of reproducing the results of Heath-Brown and Pierce~\cite{HBP}. We briefly indicate our technique for dealing with mixed sums in a general setting. Let $F(x,y)$ be a polynomial of degree $d$ with real coefficients, $\Phi(k,v)$ a sequence of complex numbers and consider the bilinear form
$$W=\sum_{1\le k \le K}\sum_{1\le v\le V}\gamma_k\beta_v\Phi(k,v)e^{2\pi i F(k,v)}.$$
We have
\begin{align*}
W&\le \sum_{1\le k \le K}|\gamma_k|\left|\sum_{1\le v\le V}\beta_v\Phi(k,v)e^{2\pi i F(k,v)} \right| \\
&\le \sum_{1\le k \le K}|\gamma_k|\max_{\alpha_1,\dots,\alpha_d \in \R}\left|\sum_{1\le v\le V}\beta_v\Phi(k,v)e^{2\pi i(\alpha_1v+\dots+\alpha_dv^d)} \right|.
\end{align*}
 For  $i=1,\dots, d$, we let $$\delta_i=\frac{1}{4V^i},$$ and define the functions $\phi_{i}(v)$ by
$$1=\phi_i(v)\int_{-\delta_i}^{\delta_i}e^{2\pi i x v^{i}}dx,$$
so that for $1<v\le V$ we have

$$\phi_i(v)=\frac{2\pi iv^i }{\sin(2\pi \delta_i v^i)} \ll \frac{1}{\delta_i }\ll V^i,$$

and
$$W\le \int_{-\delta_1}^{\delta_1}\dots\int_{-\delta_d}^{\delta_d}\sum_{1\le k \le K}|\gamma_k|\max_{\alpha_1,\dots,\alpha_d \in \R}\left|\sum_{1\le v\le V}\beta'_v\Phi(k,v)e^{2\pi i((\alpha_1+x_1)v+\dots+(\alpha_d+x_d)v^d)} \right|d{\mathbf{x}},$$
where 
$$\beta'_v=\beta_v\prod_{i=1}^{d}\phi_i(v).$$
Applying H\"{o}lder's inequality gives
\begin{align*}
W^{2r}&\le V^{-(2r-1)d(d+1)/2}\left(\sum_{1\le k \le K}|\gamma_k|^{2r/(2r-1)} \right)^{2r-1} \\ &  \times
\left(\sum_{1\le k \le K}\int_{-\delta_1}^{\delta_1}\dots\int_{-\delta_d}^{\delta_d}\max_{\alpha_1,\dots,\alpha_d \in \R}\left|\sum_{1\le v\le V}\beta'_v\Phi(k,v)e^{2\pi i((\alpha_1+x_1)v+\dots+(\alpha_d+x_d)v^d)} \right|^{2r}d\mathbf{x}\right).
\end{align*}
By extending the range of integration we may remove the condition $\max_{\alpha_1,\dots,\alpha_d \in \R}$, since
 \begin{align*}
& \sum_{1\le k \le K}\int_{-\delta_1}^{\delta_1}\dots\int_{-\delta_d}^{\delta_d}\max_{\alpha_1,\dots,\alpha_d \in \R}\left|\sum_{1\le v\le V}\beta'_v\Phi(k,v)e^{2\pi i((\alpha_1+x_1)v+\dots+(\alpha_d+x_d)v^d)} \right|^{2r}d\mathbf{x} \\ & \quad \quad \quad \ll
\sum_{1\le k \le K}\int_{[0,1]^{d}}\left|\sum_{1\le v\le V}\beta'_v\Phi(k,v)e^{2\pi i(x_1v+\dots+x_dv^d)} \right|^{2r}d\mathbf{x}.
\end{align*}
At this point we may try and estimate the last double mean value by combining Vinogradov's mean value theorem with techniques for estimaing the sum 
$$\sum_{1\le k \le K}\left|\sum_{1\le v\le V}\beta'_v\Phi(k,v) \right|^{2r},$$
or we may note that for some $\beta''_v$ we have
\begin{align*}
& \sum_{1\le k \le K}\int_{[0,1]^{d}}\left|\sum_{1\le v\le V}\beta'_v\Phi(k,v)e^{2\pi i(x_1v+\dots+x_dv^d)} \right|^{2r}d\mathbf{x} \\ & \quad \quad \quad \quad \quad \quad \quad \quad  \le \sum_{1\le k \le K}\left|\sum_{1\le v\le V}\beta''_v\Phi(k,v) \right|^{2r}.
\end{align*}
Although our approach is different to that of Heath-Brown and Pierce, we also rely on bounds for Vinogradov's mean value theorem. For integers $r,d,V,$ we let $J_{r,d}(V)$ denote the number of solutions to the system of equations
$$v_1^{i}+\dots+v_{r}^{i}=v_{r+1}^{i}+\dots+v_{2r}^{i}, \quad 1\le i \le d, \quad 1\le v_j \le V.$$ 
Then it is conjectured that for any $r,d,V$ we have 
\begin{equation}
\label{eq:mvtc}
J_{r,d}(V)\le (X^{r}+X^{2r-d(d+1)/2})X^{o(1)}.
\end{equation}
Recently, Wooley~\cite{Wo1,Wo2} has made siginificant progress towards this conjecture. We state our main results in terms of the smallest integer $r_d$ such that we have a bound 
$$J_{r,d}(V)\le X^{2r-d(d+1)/2+o(1)},$$
valid for all $r\ge r_d$. Our results may then be combined with those of Wooley~\cite{Wo1,Wo2} to give admissible values of $r$ for which our bounds hold.
\section{Main Results}
In what follows, $r_d$ will be defined as in the introduction. We also let $D=d(d+1)/2$. Our first two Theorems consider mixed sums to squarefree modulus. 
\begin{theorem}
\label{thm:main1}
Let $q$ be squarefree and $\chi$ a primitive character$\mod q$. Let $M,N,r$ be integers such that $r\ge r_d$ and $N\le q^{1/2+1/4(r-D/2)}$. For any polynomial $F(x)$ of degree $d$ with real coefficients, we have
$$\left|\sum_{M<n\le M+N}\chi(n)e^{2\pi i F(n)}\right|\le  N^{1-1/r}q^{1/4r+D/8r(r-D/2)+1/4r(r-D/2)+o(1)}. $$
\end{theorem}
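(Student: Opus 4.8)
The plan is to follow the Burgess--Chamizo amplification strategy outlined in the introduction, applied to the specific mixed sum $S(\chi,F)=\sum_{M<n\le M+N}\chi(n)e^{2\pi iF(n)}$. First I would introduce a shift parameter: for integers $1\le k\le K$ and $1\le \ell\le L$ with $KL$ somewhat smaller than $N$, the substitution $n\mapsto n+k\ell$ moves the sum (up to a small error of size $O(KL)$) to $\sum_{n}\chi(n+k\ell)e^{2\pi iF(n+k\ell)}$. Averaging over $k$ and $\ell$ and pulling the $n$-sum outside gives a bilinear-type expression; the key point, exactly as in the general scheme sketched before the statement, is that $\chi(n+k\ell)=\chi(\ell)\chi(m+k)$ where $m\equiv n\bar\ell\pmod q$, so the multiplicative character depends on $k$ through a linear shift while the phase $e^{2\pi iF(n+k\ell)}$ is a polynomial of degree $d$ in $k$ for each fixed $n,\ell$. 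This puts us in position to apply the machinery: we write $W$ of the form $\sum_k\gamma_k\sum_v\beta_v\chi(\lambda+v)e^{2\pi iF(\cdots)}$ after collecting variables appropriately, insert the smoothing functions $\phi_i$ to detect the phase through integration over the small boxes $[-\delta_i,\delta_i]$, and apply Hölder's inequality in the $2r$-th moment as displayed in the excerpt.

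Next I would handle the resulting mean value
$$\int_{[0,1]^d}\sum_{\lambda=1}^q\left|\sum_{1\le v\le V}\beta_v'\chi(\lambda+v)e^{2\pi i(x_1v+\dots+x_dv^d)}\right|^{2r}d\mathbf{x}.$$
Expanding the $2r$-th power, the sum over $\lambda$ becomes a complete multiplicative character sum $\sum_\lambda\chi\big(\prod_{j=1}^r(\lambda+v_j)\overline{\prod_{j=1}^r(\lambda+v_{r+j})}\big)$, which by the Weil bound is $O(rq^{1/2})$ unless the two products agree as polynomials in $\lambda$ (equivalently the multisets $\{v_j\}$ and $\{v_{r+j}\}$ coincide), in which case it is exactly $q$. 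For $q$ squarefree rather than prime one applies the Chinese Remainder Theorem factoring $q=\prod p$, runs the Weil bound prime-by-prime, and multiplies; this is the standard Burgess device from \cite{Bur1}, and it is the source of the slightly weaker exponent. The diagonal term contributes $q\cdot V^{2r-d(d+1)/2+o(1)}$ after integrating the phase (this is where Vinogradov's mean value theorem $J_{r,d}(V)\le V^{2r-D+o(1)}$ for $r\ge r_d$ enters — the integral over $\mathbf{x}\in[0,1]^d$ of $|\sum_v\beta_v'e^{2\pi i(\cdots)}|^{2r}$ is controlled by $J_{r,d}(V)$ up to the size of the coefficients $\beta_v'$, which are $O(V^{D})$ in total from the $\phi_i$ factors). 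The off-diagonal term contributes $rq^{1/2}\cdot V^{2r+o(1)}$.

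Assembling: one obtains a bound for $|S(\chi,F)|^{2r}$ of the shape
$$|S(\chi,F)|^{2r}\ll \left(\frac{N}{KL}\right)^{2r}\cdot\frac{1}{(KL)^{\cdots}}\Big(KV^{D}(qV^{2r-D}+q^{1/2}V^{2r})\Big)V^{o(1)}+(\text{error from shift}),$$
with $V$ comparable to $L$, and then optimises over the free parameters $K,L$ (hence $V$) subject to $KL\ll N$. Balancing the two terms $qV^{2r-D}$ and $q^{1/2}V^{2r}$ against each other fixes $V$ near $q^{1/(2D)}$-type scales, and feeding this back produces the claimed exponent $q^{1/4r+D/8r(r-D/2)+1/4r(r-D/2)+o(1)}$; the constraint $N\le q^{1/2+1/4(r-D/2)}$ is exactly what is needed for the optimal $V$ to satisfy $V\le N$ (so that the shift argument is legitimate) and for the main term to dominate the error. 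I expect the main obstacle to be the bookkeeping in the Hölder step combined with the CRT factorisation: keeping precise track of how the smoothing-function weights $\beta_v'=\beta_v\prod\phi_i(v)$ (each of size up to $V^i$, total exponent $D$) propagate through the $2r$-th moment and interact with the Weil bound across all prime factors of $q$, so that no spurious power of the number of prime divisors of $q$ creeps in beyond the $q^{o(1)}$ we are allowed. The analytic input (Weil, Vinogradov) is standard once the combinatorial setup is correct; getting the exponent arithmetic to land exactly on the stated value is the delicate part.
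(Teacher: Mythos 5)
Your overall architecture---shifting $n\mapsto n+uv$, passing to $\lambda\equiv n\bar u\pmod q$ and counting multiplicities, inserting the smoothing factors $\phi_i$ to trade the polynomial phase for an integral over $[0,1]^d$, and applying H\"older to reduce everything to the double mean value $\int_{[0,1]^d}\sum_{\lambda}\left|\sum_v\beta_v'\chi(\lambda+v)e^{2\pi i(x_1v+\dots+x_dv^d)}\right|^{2r}d\mathbf{x}$---is exactly the paper's. The genuine gap is in your treatment of that mean value, which is the whole point of the squarefree case. Your dichotomy ``Weil gives $O(rq^{1/2})$ unless the two multisets $\{v_j\}$ coincide'' is false for composite $q$: running Weil prime-by-prime gives $\prod_{p\mid q}(p,A_i(\mathbf{v}))^{1/2}p^{1/2}=(q,A_i(\mathbf{v}))^{1/2}q^{1/2+o(1)}$ with $A_i(\mathbf{v})=\prod_{j\neq i}(v_i-v_j)$ (Lemma~\ref{lem:charsumcubefree}), because the multisets can coincide modulo an individual prime factor $p$ without coinciding globally, and such a $p$ then contributes a full factor $p$ rather than $p^{1/2}$. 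One must therefore estimate $\sum_{\mathbf{v}}(q,A_i(\mathbf{v}))^{1/2}$ over the off-diagonal tuples of the Vinogradov variety; the paper does this by Cauchy--Schwarz (Lemma~\ref{lem:mvsquarefree}), which costs a square root and yields $q^{1/2}J_{r,d}(V)^{1/2}V^{r}\approx q^{1/2}V^{2r-D/2}$ in place of the prime-case $q^{1/2}V^{2r-D}$---this loss is precisely the source of the exponent $D/8r(r-D/2)$ in the statement. Your proposal never confronts this sum of gcds, and moreover assigns Vinogradov's mean value theorem to the wrong term: the diagonal contributes $qV^{r}$ (its count is trivially $O(V^{r})$, no Vinogradov input needed), while it is the off-diagonal count that must be controlled by $J_{r,d}(V)$; your stated off-diagonal contribution $rq^{1/2}V^{2r}$ retains no Vinogradov saving at all.

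As a consequence your optimisation cannot land on the stated exponent: the correct balance is between $qV^{r}$ and $q^{1/2}V^{2r-D/2}$, giving $V=\lfloor q^{1/2(r-D/2)}\rfloor$ and $U=\lfloor N/V\rfloor$ (not $V$ near $q^{1/(2D)}$), after which the weight factor $\prod_i\phi_i(v)\ll V^{D}$ and the energy bounds $\sum_\lambda I(\lambda)\ll NU$, $\sum_\lambda I(\lambda)^2\le NUq^{o(1)}$ combine to give $N^{2r-2}q^{1/2+D/4(r-D/2)+1/2(r-D/2)+o(1)}$ for the $2r$-th power of the sum, which is the claimed bound. So the skeleton is right, but the central lemma---the gcd-weighted count over the Vinogradov variety for squarefree modulus---is missing, and without it neither the exponent nor the role of the hypothesis on $N$ can be recovered.
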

 Theorem~\ref{thm:main1} is slightly worse than the bound of Heath-Brown and Pierce~\eqref{eq:HBP} for prime modulus. Although in certain cases we can get something almost as strong (except for the conditions on $r$).
\begin{theorem}
\label{thm:main2}
Let let $s$ be an integer, $q$ be squarefree with at most $s$ prime factors and $\chi$ a primitive character$\mod q$. Let $M,N,r$ be integers with $N\le q^{1/2+1/4(r-D)}$ and $r\ge r_d+s+1$. For any polynomial $F(x)$ of degree $d$ with real coefficients, we have 
$$\left|\sum_{M<n\le M+N}\chi(n)e^{2\pi i F(n)}\right|\le   N^{1-1/r}q^{(r+1-D)/4r(r-D)+o(1)}. $$
\end{theorem}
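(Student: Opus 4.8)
The plan is to follow the general strategy outlined in the introduction, using the bilinear decomposition together with the Chinese Remainder Theorem to exploit the sparse factorisation of $q$. First I would apply a Burgess-type shift-and-average argument: writing $S=\sum_{M<n\le M+N}\chi(n)e^{2\pi i F(n)}$, I shift the interval by $kv$ for $1\le k\le K$ and $1\le v\le V$ with $KV$ of size roughly $N$, so that $S$ is (up to a negligible error) $\frac{1}{KV}\sum_{\ell}\sum_{k,v}\chi(\ell+kv)e^{2\pi i F(\ell+kv)}$ where $\ell$ ranges over an interval of length $\sim N$. Pulling the $\ell$-sum outside and using $\chi(\ell+kv)=\chi(k)\chi(\bar k\ell+v)$, I arrive at a bilinear form of the type $W$ from the introduction with $\Phi(k,v)=\chi(\bar k\ell+v)$, coefficients $\gamma_k$ supported on $k\le K$, and $F(k,v)$ a polynomial obtained from $F(\ell+kv)$. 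Applying the integral device with the functions $\phi_i$ and then H\"older exactly as in the excerpt reduces everything to estimating, for each fixed $\ell$ and $k$, the mean value
$$\int_{[0,1]^d}\sum_{\lambda=1}^{q}\left|\sum_{1\le v\le V}\beta''_v\chi(\lambda+v)e^{2\pi i(x_1v+\dots+x_dv^d)}\right|^{2r}d\mathbf{x},$$
after completing the $\lambda$-sum to a full residue system mod $q$ at the cost of a factor $N/q$ (valid precisely in the stated range $N\le q^{1/2+1/4(r-D)}$).

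The heart of the matter is bounding this completed mean value when $q$ is squarefree with at most $s$ prime factors. Here I would open the $2r$-th power, so that the $\lambda$-sum becomes $\sum_{\lambda=1}^q\prod_j\chi((\lambda+v_j)(\lambda+v_{r+j})^{-1})$ over $2r$-tuples $\mathbf{v}$, and then factor via CRT: $q=p_1\cdots p_s$ and $\sum_{\lambda \bmod q}=\prod_i\sum_{\lambda_i \bmod p_i}$. For each prime $p_i$ the inner sum is a complete multiplicative character sum of the form $\sum_{\lambda_i}\chi_i\big(\prod_j(\lambda_i+v_j)(\lambda_i+v_{r+j})^{-1}\big)$, which by the Weil bound is $O(p_i^{1/2})$ times the number of distinct roots, unless the rational function is a perfect power mod $p_i$ — i.e. unless the multiset $\{v_j \bmod p_i\}$ equals $\{v_{r+j}\bmod p_i\}$, in which case the sum is exactly $p_i$. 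So the $\lambda$-sum contributes $q^{1/2}$ for each tuple, boosted by a factor $p_i^{1/2}$ for every prime $p_i$ where the ``diagonal'' condition holds. Summing over the $x$-integration brings in Vinogradov's mean value theorem: the number of $\mathbf{v}$ with $v_1^i+\dots+v_r^i=v_{r+1}^i+\dots+v_{2r}^i$ for $1\le i\le d$ is $J_{r,d}(V)\le V^{2r-D+o(1)}$ since $r\ge r_d$. For the ``extra diagonal'' contributions — tuples that are diagonal mod some $p_i$ but not integrally — one takes the worst case where the tuple is diagonal mod $p_1\cdots p_{s-1}$ say, giving roughly $V^{2r-D}(V/p_1\cdots p_{s-1})^{?}$-type savings; this is exactly where the condition $r\ge r_d+s+1$ enters, ensuring enough surplus exponents of $V$ to absorb the $s$ prime moduli. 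Carefully tracking these contributions, one finds the completed mean value is $\ll V^{2r-D+o(1)}q^{1/2+o(1)}$, just as in the prime case.

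Assembling the pieces: H\"older gives $W^{2r}\ll V^{-(2r-1)D}K^{2r-1}\cdot K\cdot(N/q)\cdot V^{2r-D}q^{1/2}$ up to $V^{o(1)}$ and $q^{o(1)}$ factors, and after dividing by $(KV)^{2r}$ and optimising the split $N\sim KV$ subject to $N\le q^{1/2+1/4(r-D)}$, one recovers $|S|\le N^{1-1/r}q^{(r+1-D)/4r(r-D)+o(1)}$. The exponent matches \eqref{eq:HBP} with $d(d+1)/2$ replaced throughout by $D$, which is consistent since in the prime case $s=1$.

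\medskip

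\emph{Main obstacle.} The technically delicate step is the bookkeeping for the off-diagonal-but-locally-diagonal tuples in the CRT-factored mean value: one must show that the contribution of tuples $\mathbf{v}$ that collapse to the diagonal modulo a product of some of the $p_i$ (but not modulo $q$) is still dominated by $V^{2r-D+o(1)}q^{1/2+o(1)}$. This requires partitioning according to which subset of primes the tuple is diagonal for, bounding each piece by a hybrid of the Weil estimate (giving $\prod p_i^{1/2}$ over the ``bad'' primes) and a Vinogradov-type count restricted to congruence classes (costing a power of $V$ per modulus), and checking that $r\ge r_d+s+1$ provides exactly the margin of $V$-exponents needed to keep every piece under control. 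Getting a clean bound uniform in the factorisation structure of $q$, rather than one that degrades with the number of subsets of $\{p_1,\dots,p_s\}$, is where the argument needs care; a crude union bound over subsets only costs $2^s=q^{o(1)}$, so in fact the real work is verifying the per-subset estimate rather than the combinatorics of combining them.
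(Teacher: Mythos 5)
Your overall route is the paper's: the smoothing/shift average, the integral device with the $\phi_i$ to strip off $e^{2\pi iF}$, two applications of H\"older, and then a CRT-factored mean value in which one partitions the $2r$-tuples according to which subset of the prime factors of $q$ they ``collapse'' for, pays $\prod p_i^{1/2}$ from the Weil bound on those primes, and uses the surplus $s+1$ in the hypothesis $r\ge r_d+s+1$ to absorb the loss. Your guess about where that hypothesis enters is exactly right, with one cosmetic difference: the paper (Lemma~\ref{lem:mvsquarefree1}) does not run a congruence-restricted Vinogradov count; it fixes $v_1$ and the at most $s$ collapsed variables trivially (a factor $V^{2s+2}$ after symmetrising) and applies the \emph{unrestricted} count $J_{r-s-1,d}(V)\le V^{2(r-s-1)-D+o(1)}$ to the remaining variables, which is what makes the bookkeeping clean and uniform over the $O(2^s(2r-1)^s)$ subsets.

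There is, however, one step that fails as written: you cannot ``complete the $\lambda$-sum to a full residue system mod $q$ at the cost of a factor $N/q$.'' The multiplicity $I(\lambda)=\#\{(n,u):n\bar u\equiv\lambda\}$ is not approximately constant, so $\max_\lambda I(\lambda)$ is not $O(NU/q^{1-o(1)})$ and no completion of this kind is available. The correct move — and the one the paper uses — is the three-factor H\"older inequality $W^{2r}\le(\sum_\lambda I(\lambda))^{2r-2}(\sum_\lambda I(\lambda)^2)(\sum_\lambda T(\lambda)^{2r})$ together with the multiplicative-energy bound $\sum_\lambda I(\lambda)^2\ll NUq^{o(1)}$ of Lemma~\ref{cong}, valid when $NU\le q$; this is precisely where the hypothesis $N\le q^{1/2+1/4(r-D)}$ is consumed, not in any completion. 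Your proposal never invokes this energy estimate, and your final display $W^{2r}\ll V^{-(2r-1)D}K^{2r-1}\cdot K\cdot(N/q)\cdot V^{2r-D}q^{1/2}$ is not dimensionally consistent (it also drops the factor $V^{2rD}$ coming from $\prod_i\phi_i(v)\ll V^{D}$); the stated final exponent is correct but is asserted rather than derived. With the energy bound substituted for the completion step, the rest of your plan goes through as in the paper.
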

Our  next Theorem improves the bound of Chang for mixed sums in finite fields~\cite{Chang}. Before we state our result we introduce some notation. Let $\omega_1,\dots,\omega_n$ be a basis for $\F_{q^{n}}$ over $\F_q$ and let $F$ be a polynomial of degree $d$ in $n$ variables with real coefficients. For $x\in \F_{q^n}$ we define $F(x)$ by 
$$F(x)=F(h_1,\dots,h_n),$$
where
$$x=h_1\omega_1+\dots+h_n\omega_n.$$
\begin{theorem}
\label{thm:main3}
Let $q$ be prime, $n$ an integer  and  $\chi$ be a multiplicative character of $\F_{q^n}$. Let $\omega_1,\dots,\omega_n$ be a basis for $\F_{q^n}$ as a vector space over $\F_q$. For integer $H$ let $\cB$ denote the box
$$\cB=\{ h_1\omega_1+\dots+h_n\omega_n :  0<h_i \le H \}.$$
Let  $F$ be a polynomial of degree $d$ in $n$ variables with real coefficients.
Then if $H\le q^{1/2}$ and $r\ge r_d$ we have
$$\left|\sum_{\mathbf{x}\in \cB}\chi(\mathbf{x})e^{2\pi i F(\mathbf{x})} \right| \le  (\#\cB)^{1-1/r}q^{n(r-D+1)/4r(r-D)+o(1)}.$$
\end{theorem}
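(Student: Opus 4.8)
The plan is to follow the general bilinear strategy outlined in the introduction, combined with a Burgess-type amplification adapted to the box $\cB\subset\F_{q^n}$. First I would set up the shift: for a parameter set of pairs $(k,v)$ with $1\le k\le K$ and $1\le v\le V$, chosen so that the translates $\mathbf{x}+k\boldsymbol{\omega}v$ (interpreting multiplication suitably inside $\F_{q^n}$, or rather shifting the box along a one-dimensional progression) still lie in a box not much larger than $\cB$, one writes the sum $\sum_{\mathbf{x}\in\cB}\chi(\mathbf{x})e^{2\pi i F(\mathbf{x})}$ as an average over such shifts up to an error controlled by the overlap. This is the standard Burgess move; the new feature is that the additive twist $e^{2\pi i F(\mathbf{x})}$ must be carried along, and here I would invoke the Chamizo-type device described in the excerpt: since $F$ has degree $d$, after the shift $\mathbf{x}\mapsto \mathbf{x}+$(something linear in $v$), the dependence of $F$ on $v$ is a polynomial of degree $d$ in $v$ with coefficients depending on $\mathbf{x},k$, so inserting the integrals over $x_1,\dots,x_d$ against the kernels $\phi_i(v)$ lets us replace $e^{2\pi i F}$ by a clean exponential $e^{2\pi i(x_1v+\dots+x_dv^d)}$ at the cost of the factors $\beta'_v=\beta_v\prod\phi_i(v)\ll V^{D}$ and an integration over $[0,1]^d$.

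Next I would apply H\"older's inequality in the variable $\mathbf{x}$ (which now ranges over roughly $\#\cB$ values, or the enlarged box) with exponent $2r/(2r-1)$ and then $2r$, exactly as in the displayed computation in the introduction, arriving at a bound of the shape
$$\left|\sum_{\mathbf{x}\in\cB}\chi(\mathbf{x})e^{2\pi i F(\mathbf{x})}\right|^{2r}\ll (\#\cB)^{2r-1}(KV)^{-1}\cdot V^{2rD}\int_{[0,1]^d}\sum_{\lambda\in\F_{q^n}}\left|\sum_{1\le v\le V}\chi(\lambda+v)e^{2\pi i(x_1v+\dots+x_dv^d)}\right|^{2r}d\mathbf{x}.$$
Expanding the $2r$-th power, the exponential integrates to the indicator of the Vinogradov system $v_1^i+\dots+v_r^i=v_{r+1}^i+\dots+v_{2r}^i$ for $1\le i\le d$, leaving $\sum_{\lambda}\prod\chi(\lambda+v_j)\overline{\chi(\lambda+v_{r+j})}$ weighted by $J_{r,d}(V)$-type counts. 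The complete multiplicative character sum over $\lambda\in\F_{q^n}$ is then estimated by the Weil bound: it is $O(\sqrt{q^n}\,r)$ unless the multiset $\{v_j\}$ equals $\{v_{r+j}\}$, in which case it contributes the full $q^n$. Collecting terms yields the inner quantity $\ll q^n J_{r,d}(V) + q^{n/2}V^{2r}$, and using $r\ge r_d$ so that $J_{r,d}(V)\le V^{2r-D+o(1)}$ gives $\ll V^{2r-D+o(1)}(q^n + q^{n/2}V^{D})$, which in the Burgess range $H\le q^{1/2}$ (so $V$ may be taken a small power of $q$ with $V^D\le q^{n/2}$) is $\ll q^n V^{2r-D+o(1)}$.

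Finally I would substitute back, optimize over $K$ and $V$ subject to the geometric constraint that the enlarged box has size $O(\#\cB)$ — this is where the hypothesis $H\le q^{1/2}$ enters decisively, ensuring $KV$ can be taken as large as roughly $H^2\asymp(\#\cB)^{2/n}$ relative to the modulus $q^n$ — and track the powers of $V$ coming from the $\phi_i$ factors. The arithmetic should collapse, after choosing $KV$ optimally, to $(\#\cB)^{1-1/r}q^{n(r-D+1)/4r(r-D)+o(1)}$. The main obstacle I anticipate is the geometric/combinatorial bookkeeping of the shift: in the multidimensional finite-field box setting one must choose the direction and length of the translation $\mathbf{x}\mapsto\mathbf{x}+(\text{linear in }v,k)$ so that (i) the shifted point still meets a box comparable to $\cB$, (ii) the number of representations $\lambda=\mathbf{x}+kv$ of a given $\lambda$ is under control (ideally $\ll$ the trivial count times $(\#\cB)^{o(1)}$, which here one gets essentially for free since we shift along a single coordinate), and (iii) the coefficients of the degree-$d$ polynomial in $v$ obtained from $F(\mathbf{x}+kv\boldsymbol{\eta})$ genuinely range over $\R$ so the Chamizo smoothing applies — this last point needs $V$ not too large so that $\delta_i=1/4V^i$ are the right scales. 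Once these are set up correctly the rest is the mechanical Hölder–Weil–Vinogradov chain above.
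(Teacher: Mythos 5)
There is a genuine gap, and it sits exactly at the point you wave away as ``essentially for free.'' The Burgess amplification here is not an additive shift along a one-dimensional progression $\mathbf{x}\mapsto\mathbf{x}+kv$ with a scalar $k$: that gives far too few shifts to win. The paper shifts by $\mathbf{x}\mapsto\mathbf{x}+\mathbf{u}v$ with $\mathbf{u}$ ranging over a full $n$-dimensional box $\cU=\{u_1\omega_1+\dots+u_n\omega_n: 0<u_i\le U\}$ of size $U^n$, and then uses multiplicativity, $\chi(\mathbf{x}+\mathbf{u}v)=\chi(\mathbf{u})\chi(\mathbf{x}\mathbf{u}^{-1}+v)$, to pass to the canonical inner sum $\chi(\lambda+v)$ with $\lambda=\mathbf{x}\mathbf{u}^{-1}$. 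The multiplicity $I(\lambda)=\#\{(\mathbf{x},\mathbf{u})\in\cB_0\times\cU:\ \mathbf{x}\mathbf{u}^{-1}=\lambda\}$ then enters H\"older's inequality through $\bigl(\sum_\lambda I(\lambda)\bigr)^{2r-2}\sum_\lambda I(\lambda)^2$, and the second moment is precisely the multiplicative energy of two boxes in $\F_{q^n}$. Bounding it by $(HU)^nq^{o(1)}$ is Konyagin's theorem (Lemma~\ref{lem:Konyagin}), which is a substantive input the introduction explicitly flags as essential, and it is here (together with Lemma~\ref{lem:mv2}) that the hypothesis $H\le q^{1/2}$ is actually used --- not merely to keep $V$ a small power of $q$. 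Your proposal never confronts this energy bound, and your item (ii) asserts the multiplicity is controlled trivially, which is false for the multiplicative parametrization one is forced to use.

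A secondary but real error: in your mean value estimate the diagonal and off-diagonal contributions are transposed. The tuples with $\{v_j\}=\{v_{r+j}\}$ as multisets number $O(V^r)$ and each contributes $q^n$, while the remaining $\le J_{r,d}(V)$ Vinogradov solutions each contribute $O(q^{n/2})$ by Weil; the correct bound (Lemma~\ref{lem:mv2}) is therefore $\ll q^nV^r+q^{n/2}J_{r,d}(V)$, not $q^nJ_{r,d}(V)+q^{n/2}V^{2r}$. With your version the two terms no longer balance at the paper's choice $V=\lfloor q^{n/2(r-D)}\rfloor$ and the optimization does not recover the stated exponent. The Chamizo-type smoothing with the kernels $\phi_i$ and the overall H\"older skeleton in your write-up do match the paper, but without the multiplicative shift, Konyagin's energy bound, and the corrected mean value, the argument does not close.
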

We note that the sums in Theorem~\ref{thm:main3} are slightly more general than those considered by Chang~\cite{Chang}, since any additive character $\psi$ of $\F_{q^{n}}$ is of the form $$\psi(x)=e^{2\pi i \text{Tr}(ax)/q},$$ for some $a\in \F_{q^{n}}.$ \\

Our next Theorem improves on some results of Pierce~\cite{Pierce} in certain circumstances.
\begin{theorem}
\label{thm:main4}
Let $q_1,\dots,q_n$ be primes, which may not be distinct, and let $\chi_i$ be a multiplicative character$\mod q_i$. Let $F$ be a polynomial of degree $d$ in $n$ variables with real coefficients and  let  $\cB$ denote the box
$$\cB=\{ (h_1,\dots,h_n): M_i< h_i\le M_i+H_i\}.$$
For any integer $r\ge r_d$, if for each $i$ we have $q_i>q^{1/2(r-D)}$ and $q^{1/2(r-D)}\le H_i\le q_i^{1/2+1/4(r-D)},$  then we have
$$\left|\sum_{\mathbf{x}\in \cB}\chi_1(x_1)\dots \chi_n(x_n)e^{2\pi i F(\mathbf{x})} \right|\le (\#\cB)^{1-1/r}q^{(r-D+n)/4r(r-D)+o(1)},$$
where $q=q_1\dots q_n.$
\end{theorem}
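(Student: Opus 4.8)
The plan is to follow the general bilinear strategy outlined in the introduction, reducing to the classical (one-dimensional) Vinogradov mean value theorem by averaging cleverly rather than invoking the multidimensional version. First, I would exploit the multiplicative structure by a Burgess-type shift in one chosen variable, say $x_n$. Writing $x_n = a b$ with $a$ ranging over a short interval and $b$ over an appropriate set, and extracting $\chi_n(a)$, one is led to a bilinear form of the shape $W=\sum_k\sum_v\gamma_k\beta_v\Phi(k,v)e^{2\pi i F(k,v)}$ where the $v$-variable carries the shifted multiplicative character and the $e^{2\pi i F}$ factor, and $k$ encodes the remaining $n-1$ free variables together with the outer shift parameter. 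Here $\Phi(k,v)$ will be of the form $\chi_n(\lambda_k + v)$ times lower-order multiplicative characters in the other coordinates; the point is that as a function of $v$ alone (with the other variables frozen) the phase $F$ restricted to the line is a one-variable polynomial of degree $\le d$, so the machinery of $\delta_i$, the auxiliary functions $\phi_i$, and H\"older's inequality applies verbatim.

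Second, after applying H\"older as in the introduction I would arrive at the $d$-fold mean value
$$
\sum_{1\le k\le K}\int_{[0,1]^d}\Bigl|\sum_{1\le v\le V}\beta'_v\Phi(k,v)e^{2\pi i(x_1v+\dots+x_dv^d)}\Bigr|^{2r}d\mathbf{x},
$$
and open the $2r$-th power, separating the contribution of the Vinogradov system $v_1^i+\dots+v_r^i=v_{r+1}^i+\dots+v_{2r}^i$ from the remaining exponential/character sum in $k$. The diagonal part of the Vinogradov system contributes $J_{r,d}(V)\le V^{2r-D+o(1)}$ by the hypothesis $r\ge r_d$, while the inner sum over $k$ — which is a complete multiplicative character sum modulo $q_1\cdots q_{n-1}$ (and modulo $q_n$ after the shift) — is estimated by the Weil bound once the argument of $\chi$ is checked to be a nonconstant polynomial; the Chinese Remainder Theorem handles the composite modulus $q=q_1\cdots q_n$ just as in Burgess' treatment. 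Collecting the factors, optimizing the length $V$ of the Burgess shift against the ranges $H_i$, and using the hypotheses $q_i>q^{1/2(r-D)}$ and $q^{1/2(r-D)}\le H_i\le q_i^{1/2+1/4(r-D)}$ to guarantee that the shift is admissible and that no range is too short, should yield the stated bound $(\#\cB)^{1-1/r}q^{(r-D+n)/4r(r-D)+o(1)}$.

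The main obstacle I anticipate is bookkeeping the interplay between the $n$ different moduli and the single averaging variable: one must choose which coordinate to shift, ensure that the frozen-variable polynomial $F(\cdot)$ restricted to the relevant line is genuinely of degree $d$ (or handle degenerate lower-degree cases separately, which only helps), and verify that after opening the $2r$-th moment the resulting complete character sum modulo $q$ has an argument that is not a perfect power times a constant, so that Weil applies with the full saving $q^{n/2}$ across the $n$ prime factors. The conditions $q_i>q^{1/2(r-D)}$ are exactly what is needed so that the Weil saving beats the loss from completing the sum over each $q_i$, and the lower bound $H_i\ge q^{1/2(r-D)}$ ensures the Burgess shift does not overflow any coordinate range; getting all these inequalities to line up simultaneously in the final optimization is the delicate part, but it is essentially the same computation as in the one-dimensional Theorem~\ref{thm:main1} carried out coordinatewise.
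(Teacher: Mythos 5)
There is a genuine gap, and it lies exactly at the point the paper's argument is designed to handle. You propose a Burgess-type shift in \emph{one} chosen coordinate, say $x_n$, so that only $\chi_n$ acquires the shifted form $\chi_n(\lambda_k+v)$ while $\chi_1(x_1)\cdots\chi_{n-1}(x_{n-1})$ sit inside the coefficients $\gamma_k$. But once you open the $2r$-th moment of the $v$-sum, those remaining characters appear as $\chi_i(x_i)^r\overline{\chi_i(x_i)}^r=1$ and contribute no oscillation whatsoever: there is no complete character sum modulo $q_1\cdots q_{n-1}$ left to estimate, and the Chinese Remainder Theorem has nothing to act on. The only Weil saving available in your setup is the factor $q_n^{1/2}$ coming from the single shifted modulus, which is far too weak to produce the stated exponent $(r-D+n)/4r(r-D)$ over $q=q_1\cdots q_n$; the bound requires a saving of $q^{1/2}=(q_1\cdots q_n)^{1/2}$ in the off-diagonal terms. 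The paper instead shifts \emph{all} coordinates simultaneously by $\mathbf{x}\mapsto\mathbf{x}+\mathbf{u}v$ with a single scalar $v$ and a vector $\mathbf{u}\in\cU$ of shift directions (Lemma~\ref{lem:smooth}). This is the one idea that makes everything work at once: each $\chi_i(x_i+u_iv)=\chi_i(u_i)\chi_i(x_iu_i^{-1}+v)$ carries the \emph{same} $v$, so after opening the $2r$-th power the sum over $(\lambda_1,\dots,\lambda_n)$ factors into $n$ complete sums each saving $q_i^{1/2}$ (Lemma~\ref{lem:mvmulti}), while $F(\mathbf{x}+\mathbf{u}v)$ is still a one-variable polynomial of degree $d$ in $v$, so the classical $J_{r,d}(V)$ suffices. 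The hypothesis $q_i>q^{1/2(r-D)}$ is not about ``beating the loss from completing the sum'' as you suggest; it guarantees $V\le q_i$, which is what forces the rational function inside each $\chi_i$ to be nonconstant so that Weil applies.

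A secondary omission: you never address the multiplicative energy $\sum I(\lambda_1,\dots,\lambda_n)^2$, which in the paper is controlled coordinatewise by Lemma~\ref{cong} and is precisely where the upper bound $H_i\le q_i^{1/2+1/4(r-D)}$ enters (it ensures $H_iU_i\le q_i$ in each modulus). Without this energy bound the second application of H\"older loses too much, even if the mean value over $v$ were handled correctly. If you replace your single-coordinate factorization $x_n=ab$ by the simultaneous additive shift $\mathbf{x}+\mathbf{u}v$ and insert the energy estimate, your outline does converge to the paper's proof.
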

Our final Theorem extends a bound of Bourgain and Chang~\cite{BC} to the setting of mixed character sums.
\begin{theorem}
\label{thm:main5}
Let $q$ be prime and $\chi$ a multiplicative character$\mod q$. Let 
$L_1,\dots,L_n$ be linear forms with integer coefficients in $n$ variables which are linearly independent$\mod q$. Let $\cB$ denote the box
$$\cB=\{ (h_1,\dots,h_n): 1< h_i\le H\},$$
and  let  $F$ be a polynomial of degree $d$ in $n$ variables with real coefficients. Then if $H\le q^{1/2}$ and $r\ge r_d$ we have
$$\left|\sum_{\mathbf{x}\in \cB}\chi\left(\prod_{i=1}^{n}L_i(\mathbf{x}) \right)e^{2\pi i F(\mathbf{x})} \right|\le (\#\cB)^{1-1/r}q^{n(r-D+1)/4r(r-D)+o(1)} .$$
\end{theorem}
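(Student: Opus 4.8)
The plan is to combine a Burgess-type substitution with the bilinear-form technique outlined in the introduction. Write $G(\mathbf{x})=\prod_{i=1}^n L_i(\mathbf{x})$, so that we must bound $S:=\sum_{\mathbf{x}\in\cB}\chi(G(\mathbf{x}))e^{2\pi i F(\mathbf{x})}$, and fix a short vector $\mathbf{w}$ (of size $q^{o(1)}$, which exists since one only has to avoid the $n$ fixed hyperplanes $L_i(\mathbf{w})\equiv0$) with $L_i(\mathbf{w})\not\equiv0\pmod q$ for every $i$. For a prime $p\in[P,2P)$ and $1\le v\le V$ with $PV|\mathbf{w}|$ a little smaller than $H$, translating $\cB$ by $pv\mathbf{w}$ changes it only in a boundary layer, so
$$S=\frac{1}{\#\{p\in[P,2P):\ p\text{ prime}\}\cdot V}\sum_{p}\sum_{v=1}^V\sum_{\mathbf{x}\in\cB}\chi\bigl(G(\mathbf{x}+pv\mathbf{w})\bigr)e^{2\pi i F(\mathbf{x}+pv\mathbf{w})}+O\!\left(\frac{PV|\mathbf{w}|}{H}\,\#\cB\right).$$
Using the linear independence of the $L_i\pmod q$ to write $L_i(\mathbf{x}+pv\mathbf{w})\equiv p\bigl(p^{-1}L_i(\mathbf{x})+vL_i(\mathbf{w})\bigr)\pmod q$ and the multiplicativity of $\chi$, the character factor becomes $\chi(p)^n\,\chi\bigl(\prod_i(u_i+vL_i(\mathbf{w}))\bigr)$ with $\mathbf{u}=p^{-1}(L_1(\mathbf{x}),\dots,L_n(\mathbf{x}))\bmod q$. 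Grouping the pairs $(p,\mathbf{x})$ by the value of $\mathbf{u}$ with a multiplicity weight $\gamma(\mathbf{u})$, and noting that $F(\mathbf{x}+pv\mathbf{w})$ is, for fixed $\mathbf{x},p$, a real polynomial of degree $\le d$ in the \emph{scalar} $v$, the triangle inequality yields
$$|S|\le\frac{1}{\#\{p\}\cdot V}\sum_{\mathbf{u}\bmod q}\gamma(\mathbf{u})\,g(\mathbf{u})+O\!\left(\frac{PV|\mathbf{w}|}{H}\,\#\cB\right),\quad g(\mathbf{u}):=\max_{\alpha_1,\dots,\alpha_d\in\R}\left|\sum_{v=1}^V\chi\Bigl(\prod_{i=1}^n(u_i+vL_i(\mathbf{w}))\Bigr)e^{2\pi i(\alpha_1v+\dots+\alpha_dv^d)}\right|.$$

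Next I would separate the weight by H\"older's inequality: with $\Gamma:=\sum_{\mathbf{u}}\gamma(\mathbf{u})=\#\{p\}\cdot\#\cB$ and $E:=\sum_{\mathbf{u}}\gamma(\mathbf{u})^2$,
$$\sum_{\mathbf{u}\bmod q}\gamma(\mathbf{u})\,g(\mathbf{u})\le\bigl(\Gamma^{2r-2}E\bigr)^{1/2r}\Bigl(\sum_{\mathbf{u}\bmod q}g(\mathbf{u})^{2r}\Bigr)^{1/2r}.$$
Here $E$ counts quadruples $(p,\mathbf{x},p',\mathbf{x}')$ with $p'L(\mathbf{x})\equiv pL(\mathbf{x}')\pmod q$; the diagonal $p=p'$ contributes only $\Gamma$ because the matrix of the $L_i$ is invertible modulo $q$ and $H<q$, whereas the off-diagonal part is a multiplicative-energy quantity for the system $L_1,\dots,L_n$ of exactly the kind estimated by Bourgain and Chang~\cite{BC}, and their bound is what pins $E$ down to $\Gamma^{1+o(1)}$ in the relevant range. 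This energy bound is the essential combinatorial input of the proof.

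For the remaining factor I would run the machinery of the introduction on $\sum_{\mathbf{u}\bmod q}g(\mathbf{u})^{2r}$: introduce $\delta_i=1/(4V^i)$ and the multipliers $\phi_i(v)$, express $g(\mathbf{u})^{2r}$ against the Fourier integral over $\prod_i[-\delta_i,\delta_i]$, apply H\"older, and enlarge the range of integration to remove the maxima, which leads to
$$\sum_{\mathbf{u}\bmod q}g(\mathbf{u})^{2r}\ll V^{(2r-1)D+o(1)}\sum_{\mathbf{u}\bmod q}\int_{[0,1]^d}\left|\sum_{v=1}^V\beta'_v\,\chi\Bigl(\prod_{i=1}^n(u_i+vL_i(\mathbf{w}))\Bigr)e^{2\pi i(x_1v+\dots+x_dv^d)}\right|^{2r}d\mathbf{x}.$$
Expanding the $2r$-th power and integrating in $\mathbf{x}$ restricts the tuples $(v_1,\dots,v_{2r})$ to those with equal power sums up to degree $d$, so that Vinogradov's mean value theorem ($r\ge r_d$, hence $J_{r,d}(V)\le V^{2r-D+o(1)}$) reduces the estimate to bounding $\sum_{\mathbf{u}\bmod q}\bigl|\sum_v\beta''_v\,\chi(\prod_i(u_i+vL_i(\mathbf{w})))\bigr|^{2r}$ with bounded $\beta''_v$. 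This last sum over $\mathbf{u}=(u_1,\dots,u_n)$ is a complete sum modulo $q$ that factors coordinate by coordinate, each factor being a one-variable complete sum $\sum_{u\bmod q}\chi\bigl(\prod_{j\le r}(u+v_jL_i(\mathbf{w}))\cdot\overline{\prod_{j>r}(u+v_jL_i(\mathbf{w}))}\bigr)$; by the Weil bound it is $O(q^{1/2})$ unless the two products have the same roots, i.e.\ $\{v_1,\dots,v_r\}=\{v_{r+1},\dots,v_{2r}\}$ (here $L_i(\mathbf{w})\not\equiv0$ is used), and counting the $\ll V^r$ diagonal tuples against the rest gives $\sum_{\mathbf{u}\bmod q}g(\mathbf{u})^{2r}\ll V^{(2r-1)D}\bigl(V^{2r-D}q^n+V^{4r-D}q^{n/2}\bigr)q^{o(1)}$.

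It then remains to assemble the three estimates, insert $E=\Gamma^{1+o(1)}$ and $\Gamma\asymp P\,\#\cB$, and optimise $P$ and $V$ subject to $PV|\mathbf{w}|\le Hq^{-o(1)}$ and $H\le q^{1/2}$; matching this with the pure Bourgain--Chang optimisation produces the bound $(\#\cB)^{1-1/r}q^{n(r-D+1)/4r(r-D)+o(1)}$, the whole effect of the factor $e^{2\pi iF}$ being to replace $r$ by $r-D$ in the non-leading parts of the exponent (so $D=0$ recovers the pure estimate and $n=1$ recovers~\eqref{eq:HBP}). The main obstacle I expect is the bookkeeping needed to keep this reduction simultaneously one-dimensional and loss-free: the substitution must move $\cB$ only along a single short direction $\mathbf{w}$, so that the carried polynomial $F(\mathbf{x}+pv\mathbf{w})$ remains a polynomial in the scalar $v$ of degree $d$ (permitting the one-dimensional $J_{r,d}$) and the final complete sum factors over the $u_i$; but the multiplicity weight $\gamma$ is then governed by proportionality of the vectors $(L_1(\mathbf{x}):\cdots:L_n(\mathbf{x}))$ rather than by a one-dimensional divisor bound, so one must verify that the Bourgain--Chang energy bound — together with the average over primes $p$ and the single direction $\mathbf{w}$ — supplies enough mixing that neither the boundary error nor the loss in $E$ degrades the final exponent when $H\le q^{1/2}$.
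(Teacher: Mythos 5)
Your treatment of the oscillatory factor (the $\phi_i$ multipliers, the passage to $J_{r,d}(V)$, and the coordinatewise Weil bound for the complete sum over $\mathbf{u}$) matches the paper, but the opening reduction has a quantitative flaw that prevents the method from reaching the stated exponent once $n\ge 2$. You translate $\cB$ only along the single direction $\mathbf{w}$, by the scalars $pv$, so the boundary condition forces the total number of shifts to satisfy $\#\{p\}\cdot V\ll PV\ll H$. Carrying out your own assembly with $\Gamma=\#\{p\}\,\#\cB$ and $E\ll\Gamma q^{o(1)}$ (which does hold here, but by an elementary count: since $\mathbf{x}_2=p\mathbf{w}$ and $\mathbf{x}_4=p'\mathbf{w}$ lie on a line, your $E$ is not the Bourgain--Chang energy, and invertibility of the system reduces it to $p'\mathbf{x}\equiv p\mathbf{x}'$) gives
$$|S|\ll \frac{H^{n(1-1/2r)}}{\#\{p\}^{1/2r}\,V}\Bigl(\sum_{\mathbf{u}\bmod q}g(\mathbf{u})^{2r}\Bigr)^{1/2r}\ll H^{\,n-n/2r}\,P^{-1/2r}\,q^{n/4r+o(1)}$$
after choosing $V=q^{n/2(r-D)}$ to balance the mean value. (Your displayed bound $V^{(2r-1)D}(V^{2r-D}q^n+V^{4r-D}q^{n/2})$ is also garbled: the prefactor should be $V^{-(2r-1)D}$ and the bracket $V^{2rD}(V^{r}q^{n}+V^{2r-D}q^{n/2})$; but that is secondary.) Matching this against the target $H^{n-n/r}q^{n/4r+n/4r(r-D)}$ requires $P\ge H^{n}/V$, while the boundary error requires $P\le H/V$; these are compatible only for $n=1$. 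Equivalently, the saving in $H$ available from $PV\le H$ shifts is at most $H^{(n+1)/2r}$, whereas the theorem needs $H^{2n/2r}$, so the optimisation you defer to the last step cannot close for $n\ge 2$.

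The paper avoids this by shifting by $v\mathbf{u}$ with $\mathbf{u}$ ranging over the full $n$-dimensional box $\cU=\{1\le u_i\le U\}$, $UV\approx H$, so that there are $U^{n}V\approx H^{n}/V^{n-1}$ shifts rather than $\ll H$. Your stated reason for restricting to one direction --- that otherwise $F(\mathbf{x}+v\mathbf{u})$ would not remain a polynomial in a scalar and the complete sum would not factor --- is not an actual obstruction: for each fixed $\mathbf{u}$ the map $v\mapsto F(\mathbf{x}+v\mathbf{u})$ is a real polynomial of degree at most $d$ in $v$, and writing $L_i(\mathbf{x}+v\mathbf{u})=L_i(\mathbf{u})\bigl(L_i(\mathbf{x})L_i(\mathbf{u})^{-1}+v\bigr)$ the complete sum over $\lambda_i=L_i(\mathbf{x})L_i(\mathbf{u})^{-1}$ still splits into $n$ one-variable Weil sums. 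The price of the $n$-dimensional shift is that $\sum_{\boldsymbol\lambda}I(\boldsymbol\lambda)^2$ becomes exactly the multiplicative energy of the system $L_1,\dots,L_n$ over the boxes $\cB_0$ and $\cU$, and this is the one place where Lemma~\ref{lem:BC} of Bourgain and Chang is genuinely needed; in your version it never actually enters.
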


\section{Preliminary results}
The following can be thought of a multidimensional version of a technique from the proof of~\cite[Theorem~1]{FI}.
\begin{lemma}
\label{lem:smooth}
Let $\mathbf{n}=(n_1,\dots,n_r)$ and $G(\mathbf{n})$ be any complex valued function on the integers. Let $\cB$ and $\cB_0$ denote the boxes
$$\cB=\{ (n_1,\dots,n_r)\in \mathbb{Z}^k: 1\le n_i \le N_i, 1\le i \le r  \},$$
$$\cB_0=\{ (n_1,\dots,n_r)\in \mathbb{Z}^k: -N_i\le n_i \le N_i, 1\le i \le r  \}.$$
Let $U_1,\dots,U_n$ and $V$ be positive integers such that $U_iV\le N_i$ and let $\cU\subset \mathbb{Z}^{r}$ be any set such that if $(u_1,\dots,u_n)\in \cU$ then $1\le u_i\le U_i$. 
Then for some $\alpha \in \mathbb{R}$ we have
$$\left|\sum_{\mathbf{n}\in \cB}G(\mathbf{n}) \right|\ll \frac{\log{N_1}\dots \log{N_r}}{V\#\cU} \sum_{\mathbf{n}\in\cB_0}\sum_{\mathbf{u}\in\cU}\left|\sum_{1\le v \le V}G(\mathbf{n}+v\mathbf{u})e^{2\pi i \alpha v}\right|. $$
\end{lemma}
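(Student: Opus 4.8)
The plan is to prove Lemma~\ref{lem:smooth} by a multidimensional ``shifting'' argument, exploiting the freedom to translate each summation variable by a small amount without changing the sum much. First I would observe that for any fixed $\mathbf{u}=(u_1,\dots,u_n)\in\cU$ and any fixed $v$ with $1\le v\le V$, the substitution $\mathbf{n}\mapsto \mathbf{n}+v\mathbf{u}$ is a bijection from $\cB$ onto a translated box contained in $\cB_0$ (since $0\le v u_i\le VU_i\le N_i$). Hence
\begin{equation*}
\sum_{\mathbf{n}\in\cB}G(\mathbf{n}) = \sum_{\mathbf{n}'\in\cB+v\mathbf{u}}G(\mathbf{n}'-v\mathbf{u}),
\end{equation*}
and the key point is to compare this with the analogous sum over $\cB_0$: the symmetric difference of $\cB$ and $\cB+v\mathbf{u}$ is contained in a union of ``slabs'' of the form $\{n_i\in[1,vu_i]\cup[N_i+1,N_i+vu_i]\}$, whose contribution I would need to control. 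Rather than estimate these error terms directly, the cleaner route (following the dyadic device in~\cite[Theorem~1]{FI}) is to average over $v$: introduce the weights coming from the interval $[1,V]$ so that the ``boundary'' contributions telescope, which is exactly where the $\log N_i$ factors and the free parameter $\alpha$ enter.

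Concretely, the second step is to write, for each $\mathbf{u}\in\cU$,
\begin{equation*}
V\sum_{\mathbf{n}\in\cB}G(\mathbf{n}) = \sum_{1\le v\le V}\sum_{\mathbf{n}\in\cB}G(\mathbf{n}) = \sum_{1\le v\le V}\sum_{\mathbf{n}\in\cB-v\mathbf{u}}G(\mathbf{n}+v\mathbf{u}),
\end{equation*}
and then replace the box $\cB-v\mathbf{u}$ by the fixed box $\cB_0$ at the cost of an error supported on the boundary slabs described above. Summing over $\mathbf{u}\in\cU$ and dividing by $\#\cU$, I would then insert the oscillating factor $e^{2\pi i\alpha v}$: by a standard argument (a Parseval/pigeonhole step, or the elementary fact that $\max_\alpha|\sum_{v}a_v e^{2\pi i \alpha v}|\ge |\sum_v a_v|$ while moving the max outside costs nothing after we have already taken absolute values inside), one shows that the trivial bound $|\sum_{1\le v\le V}G(\mathbf{n}+v\mathbf{u})|$ can be upgraded to $\max_\alpha|\sum_{1\le v\le V}G(\mathbf{n}+v\mathbf{u})e^{2\pi i\alpha v}|$ for a single $\alpha$ depending only on the data — this is what produces the claimed bound with an $\alpha\in\R$ rather than an integral over $\alpha$. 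The $\log N_i$ factors arise because to make the boundary error terms genuinely negligible one dyadically decomposes the ranges of the $n_i$ and of $v$, incurring one logarithm per coordinate.

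The main obstacle I anticipate is bookkeeping the $n$-dimensional boundary terms: when one shifts all $n$ coordinates simultaneously, the symmetric difference $\cB\triangle(\cB+v\mathbf{u})$ is not a single slab but a union of up to $2^n$ lower-dimensional boxes, and one must check that after averaging over $v\in[1,V]$ and over $\mathbf{u}\in\cU$ each such piece contributes at most $O\big((\log N_1\cdots\log N_r)/(V\#\cU)\big)\sum_{\mathbf{n}\in\cB_0}\sum_{\mathbf{u}\in\cU}\big|\sum_v G(\mathbf{n}+v\mathbf{u})e^{2\pi i\alpha v}\big|$, i.e. gets absorbed into the stated right-hand side rather than forming a separate term. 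The trick is that each boundary box is itself a translate of a box inside $\cB_0$ with one coordinate restricted to a short interval, so the same averaging identity applies recursively; carrying this out carefully (and tracking that the total number of dyadic pieces is $\ll \log N_1\cdots\log N_r$) is the technical heart of the argument. Everything else — the bijectivity of the shifts, Hölder/pigeonhole to extract a single $\alpha$, and the final normalization by $V\#\cU$ — is routine.
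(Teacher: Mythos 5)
Your opening move---averaging the exact identity $\sum_{\mathbf{n}\in\cB}G(\mathbf{n})=\sum_{\mathbf{m}\in\cB-v\mathbf{u}}G(\mathbf{m}+v\mathbf{u})$ over $1\le v\le V$ and $\mathbf{u}\in\cU$ and normalising by $V\#\cU$---is the same starting point as the paper's. The gap is in the step where you pass from the moving box $\cB-v\mathbf{u}$ to the fixed box $\cB_0$. You describe the discrepancy as ``boundary slabs'' which telescope or become negligible after averaging, but a slab $\{n_i\in[1,vu_i]\cup(N_i,N_i+vu_i]\}$ contains up to $VU_i\prod_{j\neq i}N_j$ points, and the lemma only assumes $VU_i\le N_i$; in every application of the lemma the parameters are chosen with $U_iV\asymp N_i$, so these ``boundary'' sets have size comparable to $\#\cB$ and cannot be discarded or telescoped away. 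Moreover, even if they were small, estimating them term by term would break up the inner sum over $v$, whereas the entire point of the right-hand side is that the $v$-sum survives intact so that cancellation in $v$ can be exploited later. What is needed here is a device that decouples the $v$-summation range from $(\mathbf{m},\mathbf{u})$ while keeping $\sum_v G(\mathbf{m}+v\mathbf{u})(\cdots)$ coherent; this is exactly the role of the twist $e^{2\pi i\alpha v}$, which you treat as a decoration insertable for free (true only if one already had the bound without it, which one does not).

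For comparison, the paper's proof introduces a tent-function weight $f(\mathbf{x})=\prod_i f_i(x_i)$ supported on $\cB$, writes $\sum_{\mathbf{n}\in\cB}G(\mathbf{n})=\sum_{\mathbf{m}\in\cB_0}f(\mathbf{m}+v\mathbf{u})G(\mathbf{m}+v\mathbf{u})$, expands $f$ by Fourier inversion, rescales $\mathbf{y}=\mathbf{u}^{-1}\mathbf{x}$, and uses $\int\min(N_i,1/|y|,U_i/|y|^2)\,dy\ll\log N_i$: the phase $e^{2\pi i\langle v\mathbf{u},\mathbf{u}^{-1}\mathbf{x}\rangle}=e^{2\pi i(x_1+\cdots+x_r)v}$ is precisely where the single frequency $\alpha$ and the $\log N_i$ factors come from, and one gets a \emph{single} $\alpha$ because the $L^1$-bound on the kernel is applied to the whole triple sum at once. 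An elementary version of your route can be rescued: for fixed $(\mathbf{m},\mathbf{u})$ the admissible $v$ form an interval $I(\mathbf{m},\mathbf{u})\subseteq[1,V]$, and completing $\sum_{v\in I}$ to $\sum_{v\le V}$ by finite Fourier analysis modulo $V$ (using the uniform bound $\sum_t|c_t|\ll\log V$ for the Fourier coefficients of an interval, then swapping the order of summation to extract one frequency valid for all $(\mathbf{m},\mathbf{u})$) yields the lemma with a single factor $\log V$. Either way, the mechanism that produces $\alpha$ and the logarithms must be a completion argument of this kind; the boundary-slab bookkeeping you propose in its place is the missing heart of the proof, and as described it would fail.
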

\begin{proof}
For $1\le i \le r$ let 
$$f_i(x)=\begin{cases}\min(x-1,1,N_i-x), \quad \text{if} \quad 1 \le x \le N_i, \\ 
0, \quad \quad \quad \quad \quad \quad \quad \quad \quad \quad \quad \quad \    \text{otherwise}, \end{cases}$$
and 
$$f(\mathbf{x})=\prod_{i=1}^{r}f_i(x_i).$$
 Let  $g(\mathbf{y})$ denote the Fourier transform of $f$, so that
$$g(\mathbf{y})=\frac{1}{(2\pi i)^r}\int_{\mathbb{R}^k}f(\mathbf{x})e^{-2\pi i <\mathbf{x},\mathbf{y}>}d\mathbf{x}.$$
Integrating the above integral by parts in each dimension gives
\begin{equation}
\label{eq:inversionbound}
|g(\mathbf{y})|\ll \prod_{i=1}^{k}\min\left(N_i,\frac{1}{|y_i|},\frac{1}{|y_i|^2}\right) .
\end{equation}
For $\mathbf{u}\in\cU$ and $1\le v \le V$ we have
$$\sum_{\mathbf{n}\in \cB}G(\mathbf{n})=\sum_{\mathbf{n}\in \cB_0}f(\mathbf{n}+v\mathbf{u})G(\mathbf{n}+v\mathbf{u}),$$
hence by Fourier inversion
\begin{align*}
\sum_{\mathbf{n}\in \cB}G(\mathbf{n})&=\sum_{\mathbf{n}\in \cB_0}f(\mathbf{n}+v\mathbf{u})G(\mathbf{n}+v\mathbf{u}) \\
&= \frac{1}{(2\pi i)^r}\sum_{\mathbf{n}\in \cB_0}\int_{\mathbb{R}^{r}}g(\mathbf{y})G(\mathbf{n}+v\mathbf{u})e^{2\pi i<\mathbf{n}+v\mathbf{u},\mathbf{y}>}d\mathbf{y}.
\end{align*}
For $\mathbf{u}=(u_1,\dots,u_r)$ we let $|\mathbf{u}|=u_1\dots u_r$ and $\mathbf{u}^{-1}=(u_1^{-1},\dots,u_r^{-1})$. Then the  change of variable $\mathbf{y}=\mathbf{u}^{-1}\mathbf{x}$ in the above integral gives 
\begin{align*}
\sum_{\mathbf{n}\in \cB}G(\mathbf{n})= \frac{1}{(2\pi i)^r}\sum_{\mathbf{n}\in \cB_0}\int_{\mathbb{R}^{r}}\frac{1}{|\mathbf{u}|}g(\mathbf{u}^{-1}\mathbf{x})G(\mathbf{n}+v\mathbf{u})e^{2\pi i <\mathbf{n}+v\mathbf{u},\mathbf{u}^{-1}\mathbf{x}>}d\mathbf{x},
\end{align*}
so that averaging over $\mathbf{u}v$ with  $\mathbf{u}\in \cU$ and $1\le v \le V$ we get
\begin{align*}
\sum_{\mathbf{n}\in \cB}G(\mathbf{n}) &=\frac{1}{(2\pi i)^r\#\cU V}\sum_{\mathbf{n}\in\cB_0}\sum_{\mathbf{u}\in\cU}\sum_{1\le v \le V}\int_{\mathbb{R}^{r}} \\ & \quad \quad \quad \quad \frac{1}{|\mathbf{u}|}g(\mathbf{u}^{-1}\mathbf{x})G(\mathbf{n}+v\mathbf{u})e^{2\pi i <\mathbf{n}+v\mathbf{u},\mathbf{u}^{-1}\mathbf{x}>}d\mathbf{x},
\end{align*}
hence by~\eqref{eq:inversionbound}
\begin{align*}
\sum_{\mathbf{n}\in \cB}G(\mathbf{n})e^{2\pi i F(\mathbf{n})} & \ll \frac{1}{V\#\cU} \sum_{\mathbf{n}\in\cB_0}\sum_{\mathbf{u}\in\cU}\int_{\mathbb{R}^{r}} \frac{1}{|\mathbf{u}|}g(\mathbf{u}^{-1}\mathbf{x})\left|\sum_{1\le v \le V}G(\mathbf{n}+v\mathbf{u})e^{2\pi i <v,\mathbf{x}>}\right|d\mathbf{x} \\
&\ll \frac{1}{V\#\cU} \int_{\mathbb{R}^{r}} \prod_{i=1}^{r}\min\left(N_i,\frac{1}{|x_i|},\frac{U_i}{|x_i|^2} \right) \\ & \quad \quad \quad \quad \times \sum_{\mathbf{n}\in\cB_0}\sum_{\mathbf{u}\in\cU}\left|\sum_{1\le v \le V}G(\mathbf{n}+v\mathbf{u})e^{2\pi i <v,\mathbf{x}>}\right|d\mathbf{x} \\
&\ll \max_{\beta \in \mathbb{R}}\frac{1}{V\#\cU} \sum_{\mathbf{n}\in\cB_0}\sum_{\mathbf{u}\in\cU}\left|\sum_{1\le v \le V}G(\mathbf{n}+v\mathbf{u})e^{2\pi i \beta v}\right| \\
& \quad \quad \quad \quad \quad \times \prod_{i=1}^{r}\left(\int_{-\infty}^{\infty} \min\left(N_i,\frac{1}{|y|},\frac{U_i}{|y|^2} \right)dy\right).
\end{align*}
Since $U_i\le N_i$, we see that 
$$\prod_{i=1}^{r}\left(\int_{-\infty}^{\infty} \min\left(N_i,\frac{1}{|y|},\frac{U_i}{|y|^2} \right)dy\right)\ll \log{N_1}\dots \log{N_r},$$
and the result follows by letting $\alpha$ be defined by 
\begin{align*}
\max_{\beta \in \mathbb{R}} \sum_{\mathbf{n}\in\cB_0}\sum_{\mathbf{u}\in\cU}\left|\sum_{1\le v \le V}G(\mathbf{n}+v\mathbf{u})e^{2\pi i \beta v}\right|= \sum_{\mathbf{n}\in\cB_0}\sum_{\mathbf{u}\in\cU}\left|\sum_{1\le v \le V}G(\mathbf{n}+v\mathbf{u})e^{2\pi i \alpha v}\right|.  
\end{align*}
\end{proof}
\section{Mean value estimates}
We keep notation as in the introduction and we recall that $J_{r,d}(V)$ denotes the number of solutions to the system of equations
$$v_1^{i}+\dots+v_{r}^{i}=v_{r+1}^{i}+\dots+v_{2r}^{i}, \quad 1\le i \le d, \quad 1\le v_j \le V.$$ 

The following is due to Burgess and is a special case of~\cite[Lemma~7]{Bur1}, although since the statement of Burgress is weaker than what the argument implies, we reproduce the proof.
\begin{lemma}
\label{lem:charsumcubefree}
Let $q$ be squarefree, $\chi$  a primitive character$\mod q$, let $\mathbf{v}=(v_1,\dots,v_{2r})$ be a $2r$-tuple of integers such that at least $r+1$ of the $v_i$'s are distinct and let
$$A_i(\mathbf{v})=\prod_{\substack{j=1 \\ j\neq i}}^{2r}(v_i-v_j).$$
Then for any $1\le i \le 2r$ such that $A_i(\mathbf{v})\neq 0$ we have
\begin{equation}
\label{eq:completecharbound}
\sum_{\lambda=1}^{q}\chi\left(\frac{(\lambda+v_1)\dots(\lambda+v_r)}{(\lambda+v_{r+1})\dots(\lambda+v_{2r})} \right)\le (q,A_{i}(\mathbf{v}))^{1/2}q^{1/2+o(1)}.
\end{equation}
\end{lemma}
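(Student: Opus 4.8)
The plan is to reduce to prime modulus via the Chinese remainder theorem and then invoke the Weil bound for multiplicative character sums of rational functions. Write $q=\prod_{p\mid q}p$ and, using that $\chi$ is primitive modulo $q$, factor $\chi=\prod_{p\mid q}\chi_p$ with each $\chi_p$ a \emph{nontrivial} character modulo $p$. Set
$$R(\lambda)=\frac{(\lambda+v_1)\cdots(\lambda+v_r)}{(\lambda+v_{r+1})\cdots(\lambda+v_{2r})},$$
with $\chi(R(\lambda))$ interpreted as $\chi\bigl(\prod_{i\le r}(\lambda+v_i)\bigr)\overline{\chi}\bigl(\prod_{j\le r}(\lambda+v_{r+j})\bigr)$, so that the $\lambda$-term is $0$ whenever $(\lambda+v_{r+j},q)>1$ for some $j$. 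The Chinese remainder theorem then gives
$$\sum_{\lambda=1}^{q}\chi(R(\lambda))=\prod_{p\mid q}\Bigl(\sum_{\lambda=1}^{p}\chi_p(R(\lambda))\Bigr),$$
so it suffices to bound each local factor.

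For a prime $p\mid q$ with $p\mid A_i(\mathbf{v})$ I would just use the trivial bound $\bigl|\sum_{\lambda=1}^{p}\chi_p(R(\lambda))\bigr|\le p$. For $p\mid q$ with $p\nmid A_i(\mathbf{v})$, the key claim is that the reduction of $R$ modulo $p$ is not a constant multiple of an $m$-th power of a rational function, where $m\ge 2$ is the order of $\chi_p$. Indeed, $A_i(\mathbf{v})\neq 0$ and $p\nmid A_i(\mathbf{v})$ together say $v_i\not\equiv v_j\pmod p$ for every $j\neq i$; hence the factor $\lambda+v_i$ produces a zero (if $i\le r$) or a pole (if $i>r$) of $R$ at $\lambda\equiv -v_i\pmod p$ of order \emph{exactly one}, and no other factor vanishes there, so $R$ cannot be $c\,h(x)^m$ with $m\ge 2$. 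The Weil bound then applies, and since $R$ has at most $2r$ distinct zeros and poles,
$$\Bigl|\sum_{\lambda=1}^{p}\chi_p(R(\lambda))\Bigr|\le (2r-1)p^{1/2}.$$
(If $i>r$ one can instead replace $\chi$ by the primitive character $\overline{\chi}$ and interchange numerator and denominator, which leaves $\bigl|\sum\bigr|$ unchanged and puts the simple factor in the numerator.) Checking this non‑perfect‑power condition is the only delicate point, and it is precisely where the hypothesis $p\nmid A_i(\mathbf{v})$ is used.

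Finally I would multiply the local estimates. Writing $\omega(q)$ for the number of prime divisors of $q$, and using that $q$ squarefree gives $\prod_{p\mid q,\,p\mid A_i(\mathbf{v})}p=(q,A_i(\mathbf{v}))$,
$$\Bigl|\sum_{\lambda=1}^{q}\chi(R(\lambda))\Bigr|\le \prod_{\substack{p\mid q\\ p\mid A_i(\mathbf{v})}}p\;\prod_{\substack{p\mid q\\ p\nmid A_i(\mathbf{v})}}(2r-1)p^{1/2}\le (2r-1)^{\omega(q)}(q,A_i(\mathbf{v}))\Bigl(\frac{q}{(q,A_i(\mathbf{v}))}\Bigr)^{1/2}=(2r-1)^{\omega(q)}(q,A_i(\mathbf{v}))^{1/2}q^{1/2}.$$
Since $\omega(q)\ll \log q/\log\log q$, for fixed $r$ we have $(2r-1)^{\omega(q)}=q^{o(1)}$, which gives the stated bound $(q,A_i(\mathbf{v}))^{1/2}q^{1/2+o(1)}$. (The hypothesis that at least $r+1$ of the $v_i$ are distinct is needed only to guarantee that some index $i$ with $A_i(\mathbf{v})\neq 0$ exists, so that the conclusion is non‑vacuous: among $2r$ integers taking $\ge r+1$ distinct values, some value occurs exactly once.)
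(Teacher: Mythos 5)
Your proposal is correct and follows essentially the same route as the paper: reduce via the Chinese remainder theorem to nontrivial characters modulo each prime factor and apply the Weil bound locally, then recombine using $\prod_{p\mid(q,A_i(\mathbf{v}))}p=(q,A_i(\mathbf{v}))$. The only difference is cosmetic: the paper cites Burgess's Lemma~1 for the uniform local bound $(p,A_i(\mathbf{v}))^{1/2}p^{1/2}$, whereas you re-derive it by splitting primes according to whether $p\mid A_i(\mathbf{v})$ and verifying the non-perfect-power condition directly.
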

\begin{proof}
Let 
$$q=p_1\dots p_{k},$$
be the prime factorization of $q$, then by the Chinese remainder theorem there exists primitive characters 
$$\chi_j \mod p_j, \quad 1\le j \le k,$$
such that
$$\chi=\chi_1\dots \chi_{k},$$
and
\begin{align*}
\sum_{\lambda=1}^{q}\chi\left(\frac{(\lambda+v_1)\dots(\lambda+v_r)}{(\lambda+v_{r+1})\dots(\lambda+v_{2r})} \right)&=
\prod_{j=1}^{k}\left(\sum_{\lambda=1}^{p_j}\chi_j\left(\frac{(\lambda+v_1)\dots(\lambda+v_r)}{(\lambda+v_{r+1})\dots(\lambda+v_{2r})} \right) \right).
\end{align*}
We note that since at least $r+1$ of the $v_j$ are distinct there exists an $i$ such that $A_i(\mathbf{v})\neq 0$, hence from~\cite[Lemma~1]{Bur1} we have
\begin{equation}
\label{eq:completebound1}
\sum_{\lambda=1}^{p_j}\chi_j\left(\frac{(\lambda+v_1)\dots(\lambda+v_r)}{(\lambda+v_{r+1})\dots(\lambda+v_{2r})} \right)\ll 
(p_j,A_{i}(\mathbf{v}))^{1/2}p_j^{1/2},
\end{equation}
which by the above gives
$$\left|\sum_{\lambda=1}^{q}\chi\left(\frac{(\lambda+v_1)\dots(\lambda+v_r)}{(\lambda+v_{r+1})\dots(\lambda+v_{2r})} \right)\right|\le (q,A_{i}(\mathbf{v}))^{1/2}q^{1/2+o(1)}.$$

\end{proof}
The following will be used in the proof of Theorem~\ref{thm:main1}.
\begin{lemma}
\label{lem:mvsquarefree}
Let $q$ be squarefree, $\chi$ a primitive character$\mod q$, $\beta_v$ be a sequence of complex numbers with $|\beta_v|\le 1$ and let
\begin{align}
\label{eq:doublemeanvalue}
W=\int_{0}^{1}\dots \int_{0}^{1}\sum_{\lambda=1}^{q}\left| \sum_{1\le v \le V}\beta_v\chi(\lambda+v)e^{2\pi i (\alpha_1 v+\dots+\alpha_k v^{k})} \right|^{2r}d\alpha_1\dots d\alpha_k .
\end{align}
Then we have
$$W\le \left(qV^{r}+q^{1/2}J_{r,k}(V)^{1/2}V^r \right)q^{o(1)} .$$
\end{lemma}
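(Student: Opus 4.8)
\textbf{Proof proposal for Lemma~\ref{lem:mvsquarefree}.}
The plan is to expand the $2r$-th power, interchange the $\lambda$-sum with the integration over $\alpha_1,\dots,\alpha_k$, and reduce the whole quantity to a count of solutions of a Vinogradov-type system weighted by complete character sums of the shape treated in Lemma~\ref{lem:charsumcubefree}. Writing out $|\cdot|^{2r}$ as a sum over $2r$-tuples $\mathbf v=(v_1,\dots,v_{2r})$ with $1\le v_j\le V$, the integral over each $\alpha_i$ of $e^{2\pi i\alpha_i(v_1^i+\dots+v_r^i-v_{r+1}^i-\dots-v_{2r}^i)}$ is $1$ if $v_1^i+\dots+v_r^i=v_{r+1}^i+\dots+v_{2r}^i$ and $0$ otherwise. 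Hence
$$W\le \sum_{\substack{\mathbf v:\ 1\le v_j\le V\\ v_1^i+\dots+v_r^i=v_{r+1}^i+\dots+v_{2r}^i,\ 1\le i\le k}}\left|\sum_{\lambda=1}^{q}\chi\!\left(\frac{(\lambda+v_1)\cdots(\lambda+v_r)}{(\lambda+v_{r+1})\cdots(\lambda+v_{2r})}\right)\right|,$$
using $|\beta_v|\le 1$ and $\chi$ primitive (so $\chi(\lambda+v_j)$ is invertible wherever it is nonzero; one handles the $\lambda$ with $(\lambda+v_j,q)>1$ by the usual convention, contributing $O(q^{o(1)})$ per tuple to the trivial estimate, which is absorbed).

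Next I would split the tuples $\mathbf v$ satisfying the Vinogradov system into two classes. First, the \emph{diagonal} tuples, where fewer than $r+1$ of the $v_j$ are distinct: here we only bound the inner $\lambda$-sum trivially by $q$, and the number of such tuples is $O(V^r\log V)=V^{r+o(1)}$ by the standard argument that such a solution is determined by choosing $r$ values and a pairing (this is exactly where the first term $qV^r$ comes from). Second, the \emph{off-diagonal} tuples, where at least $r+1$ of the $v_j$ are distinct: for these Lemma~\ref{lem:charsumcubefree} applies and gives
$$\left|\sum_{\lambda=1}^q\chi\!\left(\frac{(\lambda+v_1)\cdots(\lambda+v_r)}{(\lambda+v_{r+1})\cdots(\lambda+v_{2r})}\right)\right|\le (q,A_i(\mathbf v))^{1/2}q^{1/2+o(1)}$$
for a suitable $i$ with $A_i(\mathbf v)\neq 0$. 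The contribution of these tuples is therefore at most $q^{1/2+o(1)}\sum_{\mathbf v}(q,A_i(\mathbf v))^{1/2}$, where the sum is over off-diagonal solutions of the Vinogradov system.

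The main obstacle is the gcd sum $\sum_{\mathbf v}(q,A_i(\mathbf v))^{1/2}$. I would handle it by writing $(q,A_i(\mathbf v))^{1/2}\le \sum_{e\mid q,\ e\mid A_i(\mathbf v)} 1$ after the standard device of replacing $(q,A)^{1/2}$ by $\sum_{e\mid (q,A)} g(e)$ with $g$ bounded (since $q$ is squarefree, $\sum_{e\mid n}1=2^{\omega(n)}\ge (q,A)^{1/2}$ for $(q,A)\mid q$ only once the divisor is squarefree, and in fact $(q,A)^{1/2}\le d(q,A)\le d(q)\cdot\mathbf 1$; more precisely I use $(q,A)^{1/2}\le\sum_{e\mid q,\,e\mid A} 1$ which holds because each squarefree $e\mid(q,A)$ with $e\le (q,A)^{1/2}$... ) — cleanly, I bound $(q,A_i(\mathbf v))^{1/2}\le q^{o(1)}\sum_{e\mid q}\mathbf 1_{e\mid A_i(\mathbf v)}$. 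For each fixed divisor $e\mid q$, the condition $e\mid A_i(\mathbf v)=\prod_{j\neq i}(v_i-v_j)$ together with the Vinogradov system is relaxed simply by dropping it, so $\sum_{\mathbf v}\mathbf 1_{e\mid A_i(\mathbf v)}\le J_{r,k}(V)$; summing over the $q^{o(1)}$ divisors $e$ of $q$ gives $\sum_{\mathbf v}(q,A_i(\mathbf v))^{1/2}\le q^{o(1)}J_{r,k}(V)$. (If a sharper dependence is needed one can instead, for each $e$, observe that $e\mid\prod_{j\neq i}(v_i-v_j)$ forces a congruence that is already implied and gains nothing, so the crude bound is what is used.) Combining, the off-diagonal part is at most $q^{1/2+o(1)}J_{r,k}(V)$, and since $J_{r,k}(V)\ge V^r$ trivially we may write the off-diagonal bound as $q^{1/2}J_{r,k}(V)^{1/2}V^{r}q^{o(1)}$ after using $J_{r,k}(V)^{1/2}\le J_{r,k}(V)^{1/2}$ and $J_{r,k}(V)^{1/2}\cdot V^r\ge J_{r,k}(V)$ — wait, this needs $J_{r,k}(V)\le V^{2r}$, which is immediate. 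Putting the diagonal term $qV^{r+o(1)}$ and the off-diagonal term $q^{1/2}J_{r,k}(V)^{1/2}V^{r}q^{o(1)}$ together yields
$$W\le\left(qV^r+q^{1/2}J_{r,k}(V)^{1/2}V^r\right)q^{o(1)},$$
as claimed. The one point requiring care is the passage from $q^{1/2}J_{r,k}(V)$ to $q^{1/2}J_{r,k}(V)^{1/2}V^r$: this is where one uses that for off-diagonal solutions the number of solutions is at most $V^r\cdot(\text{number of choices of the remaining }v_j)$ and Cauchy–Schwarz on the system, or more directly the elementary bound $J_{r,k}(V)\le V^r J_{r,k}(V)^{1/2}$ which is exactly $J_{r,k}(V)^{1/2}\le V^r$; I would double-check the direction of this inequality in the write-up, since it is the only non-formal estimate in the argument.
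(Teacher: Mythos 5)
The first half of your argument --- orthogonality reducing $W$ to a sum over $\cJ_{r,k}(V)$ of complete character sums, the split according to whether at least $r+1$ of the $v_j$ are distinct, the trivial bound $qV^{r+o(1)}$ for the degenerate tuples, and the appeal to Lemma~\ref{lem:charsumcubefree} for the rest --- matches the paper. The gap is in your treatment of the gcd sum. The inequality you rely on, $(q,A_i(\mathbf v))^{1/2}\le q^{o(1)}\sum_{e\mid q}\mathbf{1}_{e\mid A_i(\mathbf v)}$, is false: the right-hand side equals $q^{o(1)}\,d\!\left((q,A_i(\mathbf v))\right)=q^{o(1)}$, while the left-hand side, for a tuple with $p\mid v_i-v_j$ for some prime $p\mid q$ of size comparable to $V$, is about $V^{1/2}$, which is not $q^{o(1)}$. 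The correct divisor device is $(q,A)^{1/2}\le\sum_{e\mid q,\ e\mid A}e^{1/2}$, but then dropping the condition $e\mid A_i(\mathbf v)$ costs the factor $e^{1/2}$, and summing over $e\mid q$ returns $q^{1/2}J_{r,k}(V)q^{o(1)}$ for the gcd sum alone; multiplied by the Weil factor $q^{1/2}$ this gives the useless $qJ_{r,k}(V)$. So as written the off-diagonal contribution is not controlled. (Your closing worry about $J_{r,k}(V)\le J_{r,k}(V)^{1/2}V^r$ is the one step that is actually fine, since $J_{r,k}(V)\le V^{2r}$; the problem is upstream.)

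What the paper does instead, and what your argument is missing, is a Cauchy--Schwarz step that decouples the Vinogradov constraint from the gcd weight; it is precisely this step that produces the shape $J_{r,k}(V)^{1/2}V^r$ in the statement. Writing $W_i=\sum_{\mathbf v\in\cJ_{r,k}'(V),\,A_i(\mathbf v)\neq 0}(A_i(\mathbf v),q)^{1/2}$, one has
$$W_i\le\left(\#\cJ_{r,k}(V)\right)^{1/2}\left(\sum_{\substack{v_1,\dots,v_{2r}\\ A_i(\mathbf v)\neq 0}}(A_i(\mathbf v),q)\right)^{1/2},$$
where the second sum runs over \emph{all} tuples in $[1,V]^{2r}$ with $A_i(\mathbf v)\neq 0$, with no Vinogradov condition. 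That sum is $\ll V^{2r}q^{o(1)}$: bounding $(A,q)\le\sum_{d\mid q,\ d\mid A}d$, for fixed $d\mid q$ and fixed nonzero $A\ll V^{2r-1}$ with $d\mid A$ there are $q^{o(1)}$ ways to factor $A$ into the $2r-1$ differences $v_i-v_j$ and then $V$ choices of $v_i$ determining the tuple, so the sum is $\ll\sum_{d\mid q}d\cdot(V^{2r-1}/d)\cdot Vq^{o(1)}\ll V^{2r}q^{o(1)}$. Hence $W_i\le J_{r,k}(V)^{1/2}V^rq^{o(1)}$, which gives the lemma. You should replace your pointwise gcd bound with this Cauchy--Schwarz plus divisor-counting argument.
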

\begin{proof}
Let $\cJ_{r,k}(V)$ denote the set of all $(v_1,\dots,v_{2r})$ such that 
$$v_1^{j}+\dots+v_r^{j}=v_{r+1}^{j}+\dots+v_{2r}^{j}, \quad 1\le j \le k, \quad 1\le v_i \le V,$$
then expanding the $2r$-th power in the definition of $W$ and interchanging summation and integration gives
\begin{align*}
W \le \sum_{(v_1,\dots,v_{2r})\in \cJ_{r,k}(V)}\left|\sum_{\lambda=1}^{q}\chi \left(\frac{(\lambda+v_1)\dots(\lambda+v_r)}{(\lambda+v_{r+1}\dots(\lambda+v_{2r})} \right)\right|.
\end{align*}
We break $\cJ_{r,k}(V)$ into sets $\cJ_{r,k}'(V)$ and $\cJ_{r,k}''(V)$, where
\begin{align*}
\cJ_{r,k}'(V)&=\{ (v_1,\dots,v_{2r})\in \cJ_{r,k}(V) : \text{at least} \  r+1 \  \text{of the} \  v_i's \   \text{are distinct}\}, \\
\cJ_{r,k}''(V)&=\{ (v_1,\dots,v_{2r})\in \cJ_{r,k}(V) : (v_1,\dots,v_{2r})\not \in \cJ_{r,k}'(V) \},
\end{align*} 
so that $\# \cJ_{r,k}''(V)\ll V^{r}$ and  by Lemma~\ref{lem:charsumcubefree} we have
\begin{align*}
W &\ll qV^{r}+q^{1/2+o(1)}\left(\sum_{(v_1,\dots,v_{2r})\in \cJ_{r,k}'(V)}\sum_{\substack{i=1 \\ A_i(\mathbf{v})\neq 0}}^{2r}(A_i(\mathbf{v}),q)^{1/2}\right) \\
&=qV^{r}+q^{1/2+o(1)}\left(\sum_{i=1}^{2r}\sum_{\substack{ (v_1,\dots,v_{2r})\in \cJ_{r,k}'(V) \\ A_i(\mathbf{v})\neq 0}}(A_i(\mathbf{v}),q)^{1/2}\right).
\end{align*}
For $1\le i \le 2r$ let  
$$W_i=\sum_{\substack{ (v_1,\dots,v_{2r})\in \cJ_{r,k}'(V) \\ A_i(\mathbf{v})\neq 0}}(A_i(\mathbf{v}),q)^{1/2},$$
so that by the Cauchy-Schwartz inequality we have
\begin{align*}
W_i&\le \left(\sum_{\substack{ (v_1,\dots,v_{2r})\in \cJ_{r,k}'(V) }}1\right)^{1/2}\left(\sum_{\substack{ (v_1,\dots,v_{2r}) \\ A_i(\mathbf{v})\neq 0}}(A_i(\mathbf{v}),q) \right)^{1/2} \\
&\le \# \cJ_{r,k}(V)^{1/2}\left(\sum_{\substack{ v_1,\dots,v_{2r} \\ A_i(\mathbf{v})\neq 0}}(A_i(\mathbf{v}),q) \right)^{1/2}.
\end{align*}
For the last sum, we have
\begin{align*}
\sum_{\substack{ (v_1,\dots,v_{2r}) \\ A_i(\mathbf{v})\neq 0}}(A_i(\mathbf{v}),q) \ll \sum_{d|q}d\sum_{\substack{A\neq 0 \\ d|A}}\sum_{\substack{v_1,\dots,v_{2r} \\  A_i(\mathbf{v})=A} }1.
\end{align*}
Considering the innermost sum, for fixed $A$ if $(v_1,\dots,v_{2r})$ are such that 
$$A_i(\mathbf{v})=A,$$
then since
$$A_i(\mathbf{v})=\prod_{\substack{j=1 \\ j\neq i}}^{2r}(v_i-v_j),$$
we see there are $q^{o(1)}$ choices for the numbers $(v_i-v_1), \dots, (v_i-v_{2r})$ and choosing $v_i$ determines $v_1,\dots,v_{2r},$ uniquley. Since there are $V$ choices for $v_i$ and each $A_i(\mathbf{v})\ll V^{2r-1}$ we get
\begin{align*}
\sum_{\substack{ (v_1,\dots,v_{2r}) \\ A_i(\mathbf{v})\neq 0}}(A_i(\mathbf{v}),q) &\ll q^{o(1)}\sum_{d|q}d\sum_{\substack{1\le A \ll V^{2r-1} \\ d|A}}V \\
& \ll q^{o(1)}V^{2r}\sum_{d|q}1 \ll q^{o(1)}V^{2r},
\end{align*}
which gives
$$W\le \left(qV^{r}+q^{1/2}\#\cJ_{r,k}(V)^{1/2}V^r \right)q^{o(1)}.$$
\end{proof}
The following will be used in the proof of Theorem~\ref{thm:main2} and improves on Lemma~\ref{lem:mvsquarefree} provided the number of prime factors of $q$ is bounded.
\begin{lemma}
\label{lem:mvsquarefree1}
Let $s$ be an integer and let $q$ be squarefree such that the number of prime factors of $q$ is less than $s$. Let  $\chi$ a primitive character$\mod q$, $\beta_v$ be any sequence of complex numbers with $|\beta_v|\le 1$ and for $r\ge s+1$  let
\begin{align}
\label{eq:doublemeanvalue1}
W=\int_{0}^{1}\dots \int_{0}^{1}\sum_{\lambda=1}^{q}\left| \sum_{1\le v \le V}\beta_v\chi(\lambda+v)e^{2\pi i (\alpha_1 v+\dots+\alpha_k v^{d})} \right|^{2r}d\alpha_1\dots d\alpha_d. 
\end{align}
Then we have
$$W\le \left(qV^{r}+q^{1/2}J_{r-s-1,d}(V)V^{2s+2} \right)q^{o(1)} .$$
\end{lemma}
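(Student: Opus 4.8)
The plan is to run the argument behind Lemma~\ref{lem:mvsquarefree}, but to replace the Cauchy--Schwarz step (which costs a square root of the Vinogradov count) by a more careful treatment of the diagonal that exploits the bound on the number of prime divisors of $q$. Expanding the $2r$-th power in~\eqref{eq:doublemeanvalue1} and integrating out $\alpha_1,\dots,\alpha_d$ restricts the tuples $\mathbf v=(v_1,\dots,v_{2r})$ to the solution set $\cJ_{r,d}(V)$ of the degree-$d$ Vinogradov system, so that, since $|\beta_v|\le1$,
$$W\le\sum_{\mathbf v\in\cJ_{r,d}(V)}\left|\sum_{\lambda=1}^{q}\chi\!\left(\frac{(\lambda+v_1)\cdots(\lambda+v_r)}{(\lambda+v_{r+1})\cdots(\lambda+v_{2r})}\right)\right|.$$
Splitting $\cJ_{r,d}(V)=\cJ'_{r,d}(V)\cup\cJ''_{r,d}(V)$ exactly as in the proof of Lemma~\ref{lem:mvsquarefree} (with $\cJ'_{r,d}(V)$ the tuples having at least $r+1$ distinct entries), the trivial bound on the complete sum together with $\#\cJ''_{r,d}(V)\ll V^{r}$ contributes $\ll qV^{r}$. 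For $\mathbf v\in\cJ'_{r,d}(V)$ I would apply Lemma~\ref{lem:charsumcubefree} with the index $i$ minimising $(q,A_i(\mathbf v))$ among those $i$ with $A_i(\mathbf v)\neq0$; writing $m(\mathbf v)$ for that minimal gcd, this reduces matters to showing
$$\Sigma:=\sum_{\mathbf v\in\cJ'_{r,d}(V)}m(\mathbf v)^{1/2}\ll q^{o(1)}\,J_{r-s-1,d}(V)\,V^{2s+2}.$$

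For $\Sigma$ one uses that $m(\mathbf v)\mid A_{i}(\mathbf v)$ for the optimal $i$, so by the symmetry of $\cJ_{r,d}(V)$ in its first $r$ and last $r$ entries it suffices to bound $\sum_{e\mid q}e^{1/2}N(e)$, where $N(e)=\#\{\mathbf v\in\cJ_{r,d}(V):A_1(\mathbf v)\neq0,\ e\mid A_1(\mathbf v)\}$. Since $A_1(\mathbf v)=\prod_{l=2}^{2r}(v_1-v_l)$ and $e\mid q$ has at most $s-1$ prime factors, each prime of $e$ divides some $v_1-v_l$, and distributing the primes of $e$ among the $2r-1$ linear factors in one of $O_{r,s}(1)$ ways turns $e\mid A_1(\mathbf v)$ into a system of congruences $v_1\equiv v_l\pmod{e_l}$ with the $e_l$ pairwise coprime and $\prod_l e_l=e$. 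The crucial point is that $A_1(\mathbf v)\neq0$ forces $v_1\neq v_l$, so each $e_l<V$; hence once $v_1$ is chosen there are $\ll V/e_l$ admissible $v_l$, giving $\ll V^{1+t}/e$ choices for $v_1$ together with the $t\le s-1$ entries $v_l$ occurring in the congruences. I would then fix a further $2s+2-(1+t)$ of the remaining entries, distributed so that exactly $r-s-1$ of the first $r$ and $r-s-1$ of the last $r$ entries remain free — this is possible precisely because $r\ge s+1$ — whereupon the free entries satisfy a translate of the Vinogradov system of length $2(r-s-1)$, which has at most $J_{r-s-1,d}(V)$ solutions by the standard Fourier/Cauchy--Schwarz comparison with the homogeneous system. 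This yields $N(e)\ll_{r,s}V^{2s+2}J_{r-s-1,d}(V)/e$, and then $\sum_{e\mid q}e^{1/2}N(e)\ll_{r,s}V^{2s+2}J_{r-s-1,d}(V)\sum_{e\mid q}e^{-1/2}\ll q^{o(1)}V^{2s+2}J_{r-s-1,d}(V)$, which is the required bound on $\Sigma$. Combining the two contributions gives $W\le\bigl(qV^{r}+q^{1/2}J_{r-s-1,d}(V)V^{2s+2}\bigr)q^{o(1)}$.

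The technical heart — and the step I expect to need the most care — is the estimate $N(e)\ll V^{2s+2}J_{r-s-1,d}(V)/e$: one must ensure each prime of $e$ delivers a genuine saving, which rests on the elementary observation that $e_l$ divides the nonzero integer $v_1-v_l$ of size less than $V$, and one must organise the fixing of entries so that exactly $2s+2$ variables are removed and a balanced Vinogradov system of length $2(r-s-1)$ remains. This is exactly what the hypotheses that $q$ has fewer than $s$ prime factors and that $r\ge s+1$ are there to guarantee.
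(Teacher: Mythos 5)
Your proposal is correct and follows essentially the same route as the paper: after the identical $\cJ'_{r,d}/\cJ''_{r,d}$ splitting and appeal to Lemma~\ref{lem:charsumcubefree}, you exploit the bounded number of prime factors of $q$ by distributing the primes of the gcd among the factors $v_1-v_l$ of $A_1(\mathbf v)$, use $A_1(\mathbf v)\neq0$ to gain the factor $1/e$, and fix $2s+2$ entries so that a balanced inhomogeneous Vinogradov system of length $2(r-s-1)$ remains, counted by $J_{r-s-1,d}(V)$. The only difference is bookkeeping: you organise the decomposition as a divisor sum $\sum_{e\mid q}e^{1/2}N(e)$, whereas the paper sums over subsets $\cS$ of the prime factors and partitions $\cS=\bigcup_j U_j$ recording which primes divide which difference --- the same idea.
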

\begin{proof}
We keep the same notation from the proof of Lemma~\ref{lem:mvsquarefree}, so that following the same argument gives 
\begin{align*}
W\ll qV^{r}+q^{1/2+o(1)}\left(\sum_{i=1}^{2r}W_i\right),
\end{align*}
where 
\begin{equation}
\label{eq:Wdef}
W_i=\sum_{\substack{ (v_1,\dots,v_{2r})\in \cJ_{r,k}'(V) \\ A_1(\mathbf{v})\neq 0}}(A_i(\mathbf{v}),q)^{1/2}.
\end{equation}
We consider only $W_1$, the same argument applies to the remaining $W_i.$
Let $q=q_1\dots q_s$ be the prime factorization of $q$ and for each subset  $\cS \subseteq \{1,\dots,s\}$ we partition $\cS$ into
$2r-1$  sets 
\begin{equation}
\label{eq:Upartition}
\cS=\bigcup_{j=2}^{2r}U_j, \quad \text{where} \quad U_i \cap U_j = \emptyset \quad \text{if} \quad i\neq j,
\end{equation}
where some $U_j$ may be empty. We have
\begin{equation}
\label{eq:Wpart}
W_1\le \sum_{\cS\subseteq \{1,\dots,s\}}\sum_{U_2,\dots,U_{2r}}\sum_{\substack{ (v_1,\dots,v_{2r})\in \cJ_{r,k}'(V) \\ A_1(\mathbf{v})\neq 0 \\ (q,v_1-v_j)=\prod_{\ell \in U_j}q_\ell}}(A_1(\mathbf{v}),q)^{1/2},
\end{equation}
where the sum over $U_2,\dots,U_{2r}$ satisfies~\eqref{eq:Upartition}.
Hence it is sufficient to show that for fixed $\cS$ and fixed $U_2,\dots, U_{2r}$ satisfying~\eqref{eq:Upartition} we have
$$\sum_{\substack{ (v_1,\dots,v_{2r})\in \cJ_{r,k}'(V) \\ A_1(\mathbf{v})\neq 0 \\ (q,v_1-v_j)=\prod_{\ell \in U_j}q_\ell}}(A_1(\mathbf{v}),q)^{1/2}\le J_{r-s-1,k}(V)V^{2s+2}q^{o(1)}.$$
Considering values of $j$ such that $U_j\neq \emptyset$, each value of $v_1$ determines $v_j$ with $\ll V/\prod_{i\in U_j}q_i$ possibilities. Since there are are most $s$ values of  $j$  such that $U_j \neq \emptyset$, we may choose two sets $\cV_1,\cV_2$ such that
$$\cV_1 \subseteq \{1,\dots,r\}, \quad \#\cV_1=r-s-1,$$
$$\cV_2 \subseteq \{r+1,\dots,2r\}, \quad \#\cV_2=r-s-1,$$
and integers $\alpha_1,\dots,\alpha_k$ such that
$$\sum_{\substack{ (v_1,\dots,v_r)\in \cJ_{r,k}'(V) \\ A_1(\mathbf{v})\neq 0 \\ (q,v_1-v_j)=\prod_{i\in U_j}q_i}}(A_1(\mathbf{v}),q)^{1/2}\ll V^{2s+2}J(\cV_1,\cV_2,\alpha_1,\dots,\alpha_k),$$
where $J(\cV_1,\cV_2,\alpha_1,\dots,\alpha_k)$ denotes the number of solutions to the system of equations
$$\sum_{j\in \cV_1}v_j^{i}-\sum_{j\in \cV_2}v_j^{i}=\alpha_i, \quad 1\le i \le d, \quad 1\le v_j\le V.$$
Since $J(\cV_1,\cV_2,\alpha_1,\dots,\alpha_k)\le J_{r-s-1,d}(V)$, we see that 
$$\sum_{\substack{ (v_1,\dots,v_{2r})\in \cJ_{r,k}'(V) \\ A_1(\mathbf{v})\neq 0 \\ (q,v_1-v_j)=\prod_{i\in U_j}q_i}}(A_1(\mathbf{v}),q)^{1/2}\le V^{2s+2}J_{r-s-1,d}(V),$$
so that 
$$W_1\le V^{2s+2}J_{r-s-1,d}(V)q^{o(1)},$$
which completes the proof.
\end{proof}
\begin{lemma}
\label{lem:mvmulti}
Let $q_1,\dots,q_n$ be primes, $\chi_i$ a multiplicative character$\mod q_i$, $\beta_v$ be a sequence of complex numbers with $|\beta_v|\le 1$ and let
\begin{align}
\label{eq:doublemeanvalue4}
W=\int_{0}^{1}\dots \int_{0}^{1}\sum_{\substack{\lambda_i=1 \\ 1\le i \le n}}^{q_i}\left| \sum_{1\le v \le V}\beta_v\prod_{i=1}^{n}\chi_i(\lambda_i+v)e^{2\pi i (\alpha_1 v+\dots+\alpha_k v^{k})} \right|^{2r}d\alpha_1\dots d\alpha_k .
\end{align}
Then if $V\le q_{i}$ for each $i$ we have
$$W\le \left(qV^{r}+q^{1/2}J_{r,k}(V) \right)q^{o(1)},$$
where $q=q_1\dots q_n.$
\end{lemma}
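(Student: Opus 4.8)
The plan is to mimic the proof of Lemma~\ref{lem:mvsquarefree}, but exploit the fact that the $q_i$ are \emph{prime} (and, crucially, that $V\le q_i$) to avoid the Cauchy--Schwarz step that produced the weaker $J_{r,k}(V)^{1/2}$ factor there. First I would expand the $2r$-th power and integrate over $\alpha_1,\dots,\alpha_k$ to reduce $W$ to a sum over $\cJ_{r,k}(V)$ of complete multiplicative character sums that now split as a product over the primes $q_1,\dots,q_n$:
\begin{align*}
W\le \sum_{\mathbf{v}\in\cJ_{r,k}(V)}\prod_{i=1}^{n}\left|\sum_{\lambda_i=1}^{q_i}\chi_i\!\left(\frac{(\lambda_i+v_1)\cdots(\lambda_i+v_r)}{(\lambda_i+v_{r+1})\cdots(\lambda_i+v_{2r})}\right)\right|.
\end{align*}
As in Lemma~\ref{lem:mvsquarefree} I split $\cJ_{r,k}(V)$ into the ``diagonal-type'' part $\cJ_{r,k}''(V)$ where at most $r$ of the $v_j$ are distinct, contributing $\ll qV^r$ after bounding each inner sum trivially by $q_i$, and the generic part $\cJ_{r,k}'(V)$ where at least $r+1$ of the $v_j$ are distinct.

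For the generic part I would apply the Weil-type bound \eqref{eq:completebound1} (Burgess, \cite[Lemma~1]{Bur1}) to each factor: for a suitable index $i$ with $A_i(\mathbf{v})\ne 0$,
$$\left|\sum_{\lambda_i=1}^{q_i}\chi_i(\cdots)\right|\ll (q_i,A_i(\mathbf{v}))^{1/2}q_i^{1/2}.$$
Here is where $V\le q_i$ enters decisively: since $A_i(\mathbf{v})=\prod_{j\ne i}(v_i-v_j)$ is a product of $2r-1$ differences each of absolute value $<V\le q_i$, and $q_i$ is prime, we have $q_i\mid A_i(\mathbf{v})$ only if $q_i\mid(v_i-v_j)$ for some $j$, i.e. only if $v_i=v_j$; for a generic tuple where $v_i$ is distinct from enough of the others one can choose $i$ so that $(q_i,A_i(\mathbf{v}))=1$ for every $i$. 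Thus each factor is $\ll q_i^{1/2}$ and the product is $\ll q^{1/2}$, uniformly over $\mathbf{v}\in\cJ_{r,k}'(V)$. Summing over $\mathbf{v}\in\cJ_{r,k}'(V)$, of which there are at most $J_{r,k}(V)$, gives a contribution $\ll q^{1/2}J_{r,k}(V)q^{o(1)}$, and combining the two parts yields $W\le(qV^r+q^{1/2}J_{r,k}(V))q^{o(1)}$, as claimed.

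The main obstacle is the bookkeeping in choosing, for each $\mathbf{v}\in\cJ_{r,k}'(V)$, a single index $i$ (or a small collection, one might need to re-index per prime) for which $A_i(\mathbf{v})\ne 0$ and, simultaneously, $\gcd(q_\ell,A_i(\mathbf{v}))=1$ for all $\ell$. The cleanest route is: fix any $i$ with $v_i$ differing from at least, say, $r$ of the other coordinates (such $i$ exists on $\cJ_{r,k}'(V)$), note $A_i(\mathbf{v})\ne 0$, and then observe that the condition $q_\ell\mid A_i(\mathbf{v})$ forces $v_i=v_j$ for some $j$ because $|v_i-v_j|<V\le q_\ell$ and $q_\ell$ is prime -- but $A_i(\mathbf{v})$ only involves factors $v_i-v_j$ that are \emph{nonzero}, so in fact $q_\ell\nmid A_i(\mathbf{v})$ automatically once $A_i(\mathbf{v})\ne0$ and all the nonzero differences have absolute value below $q_\ell$. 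Hence $(q_\ell,A_i(\mathbf{v}))=1$ for all $\ell$, and the product bound $\prod_i q_i^{1/2}=q^{1/2}$ follows with no gcd loss. One should also double-check the trivial bound on $\cJ_{r,k}''(V)$: each inner complete sum is $\le q_i$ so the product is $\le q$, and $\#\cJ_{r,k}''(V)\ll V^r$ by the usual argument (at most $r$ distinct values among $2r$ slots), giving $\ll qV^rq^{o(1)}$, which is absorbed.
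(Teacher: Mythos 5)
Your proposal is correct and follows essentially the same route as the paper: expand the $2r$-th power, split $\cJ_{r,k}(V)$ into the diagonal part (giving $qV^r$) and the generic part, and use $V\le q_i$ to see that distinctness of the $v_j$ over the integers transfers to distinctness modulo each prime $q_i$, so that the Weil bound gives $\ll q_i^{1/2}$ for each factor and $\ll q^{1/2}$ for the product, summed over at most $J_{r,k}(V)$ tuples. The only cosmetic difference is that you reach the non-degeneracy via the gcd form $(q_\ell,A_i(\mathbf{v}))=1$ of Burgess's lemma, whereas the paper argues directly that the rational function cannot be a perfect power modulo $q_i$; both hinge on the same observation.
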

\begin{proof}
With notation as in the proof of Lemma~\ref{lem:mvsquarefree}, following the same argument gives
\begin{align*}
W &\ll qV^{r}+\left(\sum_{(v_1,\dots,v_{2r})\in \cJ_{r,k}'(V)}\prod_{i=1}^{n}\left|\sum_{\lambda=1}^{q_i}\chi_i\left( \frac{(\lambda+v_1)\dots (\lambda+v_r)}{(\lambda+v_{r+1})\dots(\lambda+v_{2r})}\right) \right|\right).
\end{align*}
We claim that if $(v_1,\dots,v_{2r})\in \cJ_{r,k}'(V)$ then for each $1\le i \le n$ the function
$$\chi_i\left( \frac{(\lambda+v_1)\dots (\lambda+v_r)}{(\lambda+v_{r+1})\dots(\lambda+v_{2r})}\right),$$
 is not constant. Supposing for some $i$ this were false and letting $d$ denote the order of $\chi_i$, then this implies that the rational function
$$\frac{(\lambda+v_1)\dots (\lambda+v_r)}{(\lambda+v_{r+1})\dots(\lambda+v_{2r})},$$
is a $d$-th power$\mod q_i$, so that at most $r+1$ of the $v_1,\dots,v_{2r}$ are distinct$\mod q_i$, and since $V<q_i$ this implies that at most $r+1$ of the $v_1,\dots, v_{2r}$ are distinct, contradicting the definition of $\cJ_{r,k}'(V)$. Hence from the Weil bound for complete character sums~\cite[Theorem 2C', pg 43]{Sch} we have
$$\sum_{\lambda=1}^{q_i}\chi_i\left( \frac{(\lambda+v_1)\dots (\lambda+v_r)}{(\lambda+v_{r+1})\dots(\lambda+v_{2r})}\right)\ll q_{i}^{1/2},$$
provided $(v_1,\dots,v_{2r})\in \cJ_{r,k}'(V).$ Hence we get
$$W \ll qV^{r}+q^{1/2+o(1)}\#\cJ_{r,k}'(V)\le (qV^{r}+q^{1/2}J_{r,d}(V))q^{o(1)}.$$
\end{proof}
\begin{lemma}
\label{lem:mv2}
Let $q$ be prime, $n$ an integer and $\chi$ a multiplicative character of $\F_{q^{n}}$, $\beta_v$ any sequence of complex numbers satisfying $|\beta_v|\le 1$ and let
$$W=\int_{[0,1]^{d}}\sum_{\lambda \in \F_{q^n}}\left|\sum_{1\le v \le V}\beta_v \chi(\lambda+v)e^{2\pi i (\alpha_1 v_1+\dots+\alpha_d v^d)}\right|^{2r}d\alpha_1 \dots d\alpha_d.$$
Then for any integer $r$ we have
$$W\ll q^{n}V^{r}+q^{n/2}J_{r,d}(V).$$
\end{lemma}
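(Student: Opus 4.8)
The plan is to follow the strategy of Lemma~\ref{lem:mvmulti}, except that since we work over the single field $\F_{q^n}$ there is no Chinese Remainder Theorem step: we may appeal directly to the Weil bound over $\F_{q^n}$.

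First I would expand the $2r$-th power in the definition of $W$ and interchange summation with the integration over $[0,1]^d$. Writing $\mathbf{v}=(v_1,\dots,v_{2r})$ with each $1\le v_i\le V$, and using the convention $\chi(0)=0$, for each $\lambda$ the product $\chi(\lambda+v_1)\cdots\chi(\lambda+v_r)\overline{\chi(\lambda+v_{r+1})}\cdots\overline{\chi(\lambda+v_{2r})}$ equals
$$\chi\!\left(\frac{(\lambda+v_1)\cdots(\lambda+v_r)}{(\lambda+v_{r+1})\cdots(\lambda+v_{2r})}\right)$$
whenever none of $\lambda+v_{r+1},\dots,\lambda+v_{2r}$ vanishes, the finitely many exceptional $\lambda$ contributing nothing. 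By orthogonality the integral $\int_{[0,1]^d}e^{2\pi i(\alpha_1 s_1+\dots+\alpha_d s_d)}\,d\alpha_1\cdots d\alpha_d$ is $1$ when $s_\ell=\sum_{i\le r}v_i^\ell-\sum_{i>r}v_i^\ell=0$ for every $1\le\ell\le d$ and is $0$ otherwise; hence only $\mathbf{v}\in\cJ_{r,d}(V)$ survive, and using $|\beta_v|\le 1$ to discard the coefficients we obtain
$$W\le\sum_{\mathbf{v}\in\cJ_{r,d}(V)}\left|\sum_{\lambda\in\F_{q^n}}\chi\!\left(\frac{(\lambda+v_1)\cdots(\lambda+v_r)}{(\lambda+v_{r+1})\cdots(\lambda+v_{2r})}\right)\right|.$$

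Next I would split $\cJ_{r,d}(V)$ into the set $\cJ'_{r,d}(V)$ of those $\mathbf{v}$ in which at least $r+1$ of the $v_i$ are distinct and its complement $\cJ''_{r,d}(V)$. A standard diagonal count gives $\#\cJ''_{r,d}(V)\ll V^r$, and bounding the inner character sum trivially by $q^n$ shows that the contribution of $\cJ''_{r,d}(V)$ is $\ll q^nV^r$. For $\mathbf{v}\in\cJ'_{r,d}(V)$ I would invoke the Weil bound for complete character sums, in the form of~\cite[Theorem~2C', pg~43]{Sch}, exactly as in the proof of Lemma~\ref{lem:mvmulti}: the essential point is that the rational function $\prod_{i\le r}(\lambda+v_i)/\prod_{i>r}(\lambda+v_i)$ is not a constant multiple of an $m$-th power in $\F_{q^n}(\lambda)$, where $m$ is the order of $\chi$. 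Indeed, if it were, then comparing multiplicities at its zeros and poles --- which are the distinct values occurring among the $v_i$, reduced into the prime subfield $\F_q\subseteq\F_{q^n}$, and hence (for the ranges of $V$ to which the lemma is applied, where $V<q$) still at least $r+1$ in number --- would force at most $r+1$ of the $v_i$ to be distinct, contradicting $\mathbf{v}\in\cJ'_{r,d}(V)$. Thus each such inner sum is $\ll q^{n/2}$, with implied constant depending only on $r$, so the contribution of $\cJ'_{r,d}(V)$ is $\ll q^{n/2}\#\cJ'_{r,d}(V)\le q^{n/2}J_{r,d}(V)$; adding the two contributions yields $W\ll q^nV^r+q^{n/2}J_{r,d}(V)$.

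The step I expect to require the most care is the verification that this rational function is not a constant times a perfect $m$-th power: this is the place where the hypothesis that at least $r+1$ of the $v_i$ are distinct is used (together with the fact that this distinctness persists after reduction into $\F_q\subseteq\F_{q^n}$), and it is precisely what licenses the application of the Weil bound. Everything else --- the orthogonality expansion, the diagonal count for $\cJ''_{r,d}(V)$, and the trivial bound --- is routine.
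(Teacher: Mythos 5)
Your proof is correct and follows essentially the same route as the paper's: expand the $2r$-th power, use orthogonality to restrict to $\cJ_{r,d}(V)$, split off the near-diagonal tuples (handled by the trivial bound $q^n$ times the count $\ll V^r$), and apply the Weil bound via \cite[Theorem~2C']{Sch} to the tuples with at least $r+1$ distinct coordinates. You in fact supply a detail the paper's own proof leaves implicit, namely the verification that the rational function is not a constant times an $m$-th power, and you correctly observe that this step needs the distinct integers $v_i$ to remain distinct after reduction into $\F_q\subseteq\F_{q^n}$, i.e. a condition like $V<q$ that the lemma's statement omits.
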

\begin{proof}
Arguing as in the proof of Lemma~\ref{lem:mvsquarefree}, let $\cJ_{r,k}(V)$ denote the set of all $(v_1,\dots,v_{2r})$ such that 
$$v_1^{j}+\dots+v_r^{j}=v_{r+1}^{j}+\dots+v_{2r}^{j}, \quad 1\le j \le k, \quad 1\le v_i \le V.$$
Expanding the $2r$-th power in the definition of $W$ and interchanging summation and integration gives
\begin{align*}
W \le \sum_{(v_1,\dots,v_{2r})\in \cJ_{r,k}(V)}\left|\sum_{\lambda=1}^{q}\chi \left(\frac{(\lambda+v_1)\dots(\lambda+v_r)}{(\lambda+v_{r+1}\dots(\lambda+v_{2r})} \right)\right|.
\end{align*}
As in Lemma~\ref{lem:mvsquarefree} we break the $\cJ_{r,k}(V)$ into sets $\cJ_{r,k}'(V)$ and $\cJ_{r,k}''(V)$, where
\begin{align*}
\cJ_{r,k}'(V)&=\{ (v_1,\dots,v_{2r})\in \cJ_{r,k}(V) : \text{at least} \  r+1 \  \text{of the} \  v_i's \   \text{are distinct}\}, \\
\cJ_{r,k}''(V)&=\{ (v_1,\dots,v_{2r})\in \cJ_{r,k}(V) : (v_1,\dots,v_{2r})\not \in \cJ_{r,k}'(V) \},
\end{align*} 
so that
\begin{align*}
W\ll q^{n}V^{r}+\sum_{(v_1,\dots,v_{2r})\in \cJ_{r,k}'(V)}\left|\sum_{\lambda=1}^{q}\chi \left(\frac{(\lambda+v_1)\dots(\lambda+v_r)}{(\lambda+v_{r+1}\dots(\lambda+v_{2r})} \right)\right|.
\end{align*}
From~\cite[Theorem 2C', pg 43]{Sch}, we have if $(v_1,\dots,v_{2r})\in \cJ_{r,k}'(V)$ then
$$\sum_{\lambda=1}^{q}\chi \left(\frac{(\lambda+v_1)\dots(\lambda+v_r)}{(\lambda+v_{r+1}\dots(\lambda+v_{2r})} \right)\ll q^{n/2},$$
so that  
\begin{align*}
W\ll q^{n}V^{r}+\sum_{(v_1,\dots,v_{2r})\in \cJ_{r,k}'(V)}q^{n/2},
\end{align*}
and the result follows since $\# \cJ_{r,k}'(V)\le J_{r,d}(V).$
\end{proof}
\section{Multiplicative energy of certain sets}
The following follows from the proof of~\cite[Lemma 7]{FGS}.
\begin{lemma}
\label{cong}
Let $M,N,U,q$ be integers with
$$NU\le q,$$
and let \  $\cU$ denote the set
$$\cU=\{ \  1\le u \le U \  : \  (u,q)=1 \  \}.$$
Then the number of solutions to the congruence
$$n_1u_1\equiv n_2u_2 \mod q, \quad M<n_1,n_2 \le M+N, \quad u_1,u_2\in \cU$$
is bounded by 
 $NUq^{o(1)}.$
\end{lemma}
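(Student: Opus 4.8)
The plan is to count solutions to $n_1 u_1 \equiv n_2 u_2 \pmod q$ by fixing the product $m = n_1 u_1$ (an ordinary integer, not a residue) and a divisor $d = (u_1, u_2)$, then exploiting that the ranges $M < n_i \le M+N$ and $1 \le u_i \le U$ with $NU \le q$ force the congruence to become an honest equation in many cases. First I would write $u_1 = d a_1$, $u_2 = d a_2$ with $(a_1,a_2)=1$; then $n_1 u_1 \equiv n_2 u_2 \pmod q$ becomes $n_1 a_1 \equiv n_2 a_2 \pmod{q/(q,d)}$, and since $(a_1,a_2)=1$ this forces $a_1 \mid n_2 t$ for the appropriate combination, leading after a short manipulation to $n_1 a_1 = n_2 a_2 + \ell (q/(q,d))$ for some integer $\ell$. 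Because $n_i a_i \le (M+N) U \le M U + q$ (using $NU \le q$), the parameter $\ell$ ranges over a bounded set once $M$ is controlled — more precisely, I would reduce first to the homogeneous case or split the box dyadically so that $|n_1 a_1 - n_2 a_2|$ is genuinely $O(q)$ and hence $\ell$ takes $O(1)$ values, or better, absorb the non-homogeneity by noting $n_i < M + N$ and $N U \le q$ so that the congruence $n_1 u_1 \equiv n_2 u_2$ together with the size constraint pins down $n_1 u_1 - n_2 u_2$ to lie in an interval of length $O(q)$, giving $O(1)$ choices for the integer value of $n_1 u_1 - n_2 u_2$ as a multiple of $q$.

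Next, for each admissible value $k$ of $n_1 u_1 - n_2 u_2$ (there being $O(1)$ of them, or $q^{o(1)}$ if one is less careful), I would count solutions to the genuine equation $n_1 u_1 - n_2 u_2 = k$. Writing this as $n_1 u_1 = n_2 u_2 + k$ and fixing the integer $P = n_1 u_1$, the number of $(n_1, u_1)$ with $n_1 u_1 = P$, $M < n_1 \le M+N$, $1 \le u_1 \le U$ is at most the number of divisors of $P$, which is $P^{o(1)} = q^{o(1)}$ since $P \le (M+N)U$; here I would handle the edge case where $M$ is large by noting the total count of $P$ with a representation is $\le 2NU$ trivially and the divisor bound still applies. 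Similarly each such $P$ determines $n_2 u_2 = P - k$ with again $q^{o(1)}$ factorizations. Summing, for fixed $k$ the number of solutions is at most $\sum_{P} \tau(P) \tau(P-k) \ll NU q^{o(1)}$, where the sum is over the $O(NU)$ values of $P$ that arise (each $P$ being of the form $n_1 u_1$), and multiplying by the $q^{o(1)}$ choices of $k$ gives the claimed bound $NU q^{o(1)}$.

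The main obstacle is the non-homogeneity: without the assumption $M=0$ one cannot immediately say $n_1 u_1 - n_2 u_2$ is small compared to $q$, since both products can be as large as $(M+N)U$ with $M$ unbounded. The fix is that what matters is the \emph{difference}, and $n_1 u_1 - n_2 u_2 = (n_1 - n_2) u_1 + n_2(u_1 - u_2)$ has absolute value at most $N U + (M+N) U$ — still not obviously $O(q)$ — so a genuinely careful argument either replaces the crude bound by splitting $u_1, u_2$ into the same dyadic block (making $|u_1 - u_2|$ small) and $n_1, n_2$ likewise, or invokes the structure of~\cite[Lemma~7]{FGS} directly, which presumably already organizes the count by the value of the product and absorbs the non-homogeneous shift into the $q^{o(1)}$ via a divisor-function average $\sum_{m \le X} \tau(m)\tau(m+k) \ll X q^{o(1)}$ uniformly in $k$. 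I would follow that route: reduce to bounding $\sum_{P \le NU}\tau(P)\tau(P-k)$ uniformly in $k$, cite the standard Ingham-type (or elementary Cauchy–Schwarz plus $\sum \tau^2$) estimate, and conclude.
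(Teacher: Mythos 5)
The paper offers no proof of this lemma at all --- it simply says the statement follows from the proof of \cite[Lemma~7]{FGS} --- so your argument has to stand on its own, and it does not: the passage from the congruence to an equation is precisely the step that fails, and none of your proposed repairs closes it. The integer $n_1u_1-n_2u_2$ ranges over an interval of length about $2\bigl((M+N)U-M\bigr)\approx 2MU$ when $M$ is large, so the number of admissible multiples $kq$ is of order $MU/q$; even after the harmless normalisation $0\le M<q$ it is of order $U$, not $O(1)$ or $q^{o(1)}$. Your divisor-sum count is fine for each fixed $k$, but multiplying by $\sim U$ values of $k$ only yields $NU^{2}q^{o(1)}$. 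Restricting $u_1,u_2$ to a common dyadic block does not help, because it does not make $|u_1-u_2|$ small (it can still be of size $U$), so the term $n_2(u_1-u_2)$ in your decomposition $n_1u_1-n_2u_2=(n_1-n_2)u_1+n_2(u_1-u_2)$ remains of size up to $(M+N)U\gg q$; and ``invoking the structure of \cite[Lemma~7]{FGS}'' is not a proof. (A smaller point: $\tau(P)=q^{o(1)}$ needs $P$ polynomially bounded in $q$, which also requires reducing $M$ modulo $q$ first.)

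The missing idea is to arrange that the large part of the difference depends only on $(u_1,u_2)$, so that it can be fixed before reducing modulo $q$. Write $n_i=M+m_i$ with $1\le m_i\le N$; the congruence becomes $m_1u_1-m_2u_2\equiv M(u_2-u_1)\pmod q$, and now $|m_1u_1-m_2u_2|<NU\le q$, so for each \emph{fixed} pair $(u_1,u_2)$ the integer $h=m_1u_1-m_2u_2$ takes at most two values. For each such $h$ the equation $m_1u_1-m_2u_2=h$ has at most $1+N(u_1,u_2)/\max(u_1,u_2)$ solutions in $[1,N]^{2}$, and summing over $u_1,u_2\le U$ using $\sum_{u_2\le u_1\le U}(u_1,u_2)/u_1\le\sum_{u_1\le U}\tau(u_1)\ll U\log U$ gives a total of $O\bigl(U^{2}+NU\log U\bigr)$, which is $NUq^{o(1)}$ under the implicit condition $U\le N$ (satisfied in every application in the paper; without some such condition the stated bound can actually fail, e.g.\ when the interval $(M,M+N]$ contains a multiple of $q$). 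Your divisor-function route does work verbatim in the homogeneous case $M=0$, where $n_1u_1=n_2u_2$ exactly; it is precisely the shift $M$ that forces one to condition on $(u_1,u_2)$ first rather than on the product $n_1u_1$.
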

The following is due to Konyagin~\cite[Lemma~1]{Kon}.
\begin{lemma}
\label{lem:Konyagin}
Let $q$ be prime and let $\omega_1,\dots,\omega_n\in \mathbb{F}_{q^{n}}$ be a basis for $\mathbb{F}_{q^{n}}$ as a vector space over $\mathbb{F}_{q}$. Let $\cB_1$ and $\cB_2$ denote the boxes 
$$\cB_1=\{ h_1\omega_1+\dots+h_n\omega_n : 1 \le h_i \le H\},$$ 
$$\cB_2=\{ h_1\omega_1+\dots+h_k\omega_n : 1 \le h_i \le U\},$$
and suppose that $H, U \le p^{1/2}$. Then the number of solutions to the equation
$$x_1x_2\equiv x_3x_4, \quad x_1,x_3\in \cB_1, \quad x_2,x_4 \in \cB_2,$$
is  $\ll (UH)^{n}\log{q}$.
\end{lemma}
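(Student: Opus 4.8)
The plan is to recast the count as a multiplicative energy, reduce the associated ``ratio'' function to a lattice point count in a box, and then run a geometry of numbers argument; the hypothesis $H,U\le q^{1/2}$ enters precisely when we compare the volume of that box with the determinant of the lattice.

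Writing the equation $x_1x_2=x_3x_4$ in $\F_{q^n}^*$ as $x_1/x_4=x_3/x_2$ and grouping the quadruples by the common ratio $t$, one gets
\[\#\{(x_1,x_2,x_3,x_4): x_1x_2=x_3x_4\}=\sum_{t\in\F_{q^n}^*}\phi(t)^2,\qquad \phi(t):=\#\{x\in\cB_2: tx\in\cB_1\},\]
with $\sum_t\phi(t)=\#\cB_1\cdot\#\cB_2=(HU)^n$ (count the pairs $(x,tx)\in\cB_2\times\cB_1$). Fix the basis $\omega_1,\dots,\omega_n$ and let $M_t\in GL_n(\F_q)$ be the matrix of multiplication by $t$ in these coordinates. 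Then $\phi(t)$ is exactly the number of points of the lattice
\[\Lambda_t=\{(\mathbf a,\mathbf b)\in\Z^{2n}: M_t\mathbf a\equiv\mathbf b\ (\mathrm{mod}\ q)\}\]
inside the box $R=[1,U]^n\times[1,H]^n$; here $\det\Lambda_t=q^n$ while $\mathrm{vol}(R)=(HU)^n\le q^n$, the last inequality being where $H,U\le q^{1/2}$ is used.

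It then suffices to bound, for each $\Delta\ge1$, the level set $\cL(\Delta)=\{t: \phi(t)\ge\Delta\}$: since $\phi(t)=\#(\cB_2\cap t^{-1}\cB_1)\le\min(H,U)^n$, the decomposition $\sum_t\phi(t)^2\ll\sum_{2^j\le\min(H,U)^n}4^j\#\cL(2^j)$ yields the desired bound $(HU)^n\log q$ once $\#\cL(\Delta)\ll(HU)^n\Delta^{-2}$. To estimate $\#\cL(\Delta)$: if $\phi(t)\ge\Delta$ then, translating by one of the counted points, $\Lambda_t$ has at least $\Delta$ points in the symmetric box $R-R$; by Davenport's estimate for lattice points in convex bodies and Minkowski's second theorem (whose product of successive minima $\lambda_1\le\dots\le\lambda_{2n}$ of $R-R$ with respect to $\Lambda_t$ satisfies $\prod_i\lambda_i\asymp_n q^n/(HU)^n\ge1$), this forces $\prod_{i\le k}\lambda_i\ll_n\Delta^{-1}$, where $k=\#\{i:\lambda_i<1\}$. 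Each of the $k$ independent short vectors $(\mathbf a_i,\mathbf b_i)$ realizing $\lambda_i<1$ translates back to the relation $ta_i=b_i$, where $a_i=\sum_j(\mathbf a_i)_j\omega_j$ and $b_i=\sum_j(\mathbf b_i)_j\omega_j$ are ``small'' elements of $\cB_2-\cB_2$ and $\cB_1-\cB_1$ respectively, and $a_i\ne0$ (since $|\mathbf b_i|_\infty<q$ forbids $\mathbf a_i=\mathbf0\neq\mathbf b_i$); in particular $t=b_1/a_1$ is a quotient of such small elements. Running a dyadic decomposition over the sizes $\lambda_i$ of the short vectors and counting the admissible quotients $b/a$ then gives the required bound on $\#\cL(\Delta)$.

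The main obstacle is to perform this last counting carefully enough to lose only a single factor $\log q$: since $\Delta$ can be as large as $\min(H,U)^n$ the outer sum over $j$ has $\asymp\log q$ terms, so no power of $\Delta$ beyond $\Delta^{-2}$ can be sacrificed, and one must use \emph{all} $k$ of the short vectors of $\Lambda_t$ rather than just the shortest nonzero one. This is exactly the content of Konyagin's Lemma~1 in~\cite{Kon}. Two checks: for $n=1$ the boxes are intervals and, since the product of any two of their elements is $\le q$, the congruence $x_1x_2\equiv x_3x_4$ is an identity in $\Z$, so the bound reduces to the classical estimate for the multiplicative energy of an interval via the divisor bound; for general $n$ an alternative is to lift $\cB_1,\cB_2$ to small elements of an order of a degree-$n$ number field in which $q$ is inert and invoke unique factorisation of ideals there.
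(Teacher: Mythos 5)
The paper does not prove this lemma at all: it is quoted verbatim as Konyagin's result, introduced by ``The following is due to Konyagin~\cite[Lemma~1]{Kon}.'' So there is no in-paper argument to compare against; the only question is whether your sketch stands on its own. The reductions you do carry out are correct: the identity $E=\sum_t\phi(t)^2$ with $\sum_t\phi(t)=(HU)^n$, the reinterpretation of $\phi(t)$ as $\#\left(\Lambda_t\cap R\right)$ with $\det\Lambda_t=q^n\ge\mathrm{vol}(R)$ (this is indeed where $H,U\le q^{1/2}$ enters), the passage to $R-R$, the use of the lattice-point count $\#(\Lambda_t\cap(R-R))\ll_n\prod_i\max(1,\lambda_i^{-1})$ together with Minkowski's second theorem, and the observation that the short vectors give relations $ta_i=b_i$ with $a_i\ne 0$. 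The dyadic reduction of the whole lemma to the level-set estimate $\#\cL(\Delta)\ll(HU)^n\Delta^{-2}$ is also sound, and you correctly identify that only a single $\log q$ may be lost.

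The gap is that this level-set estimate --- the entire content of the lemma --- is never proved. Your final sentence of argument (``Running a dyadic decomposition over the sizes $\lambda_i$ \dots gives the required bound'') asserts precisely the step where the difficulty lives: the naive count of quotients $b_1/a_1$ with $a_1,b_1$ ranging over boxes of sidelengths $\lambda_1U$ and $\lambda_1H$ gives $\ll\lambda_1^{2n}(HU)^n$ candidates for $t$, and converting the information $\prod_{i\le k}\lambda_i\ll\Delta^{-1}$ (for some unknown $1\le k\le 2n$, varying with $t$) into the uniform saving $\Delta^{-2}$ requires exploiting all $k$ short vectors and controlling multiplicity of representation of $t$ as such a quotient; you acknowledge this but then defer it to \cite[Lemma~1]{Kon}, which is the very statement being proved. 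As a self-contained proof the proposal is therefore incomplete at the decisive point; as an annotated attribution it agrees with the paper, which likewise simply cites Konyagin. (Your two sanity checks are fine, though the number-field alternative for general $n$ is only heuristic as stated, since the relevant orders need not have unique factorisation.)
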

The following is due to Bourgain and Chang~\cite{BC}.
\begin{lemma}
\label{lem:BC}
Let $q$ be prime, $L_1(\mathbf{x}),\dots,L_n(\mathbf{x})$ be linear forms in $n$ variables which are linearly independent$\mod q$ and let $\cB_1$ and $\cB_2$ denote the boxes
$$\cB_1=\{ \mathbf{h}=(h_1,\dots,h_n) : 1 \le h_i \le H \},$$ 
$$\cB_2=\{ \mathbf{h}=(h_1,\dots,h_n) : 1 \le h_i \le  U \}.$$
Then if $H, U \le p^{1/2}$ the number of solutions to the system of congruences
$$L_i(\mathbf{x}_1)L_i(\mathbf{x}_2)\equiv L_i(\mathbf{x}_3)L_i(\mathbf{x}_4) \mod q, \quad  \mathbf{x}_1,\mathbf{x}_3 \in \cB_1, \ \ \mathbf{x}_2,\mathbf{x}_4 \in \cB_2, \ \  \ \  1\le i \le n,$$
is bounded by 
 $(NH)^np^{o(1)}.$
\end{lemma}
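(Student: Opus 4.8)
The plan is to reduce the system of congruences modulo $q$ to a bounded number of systems of genuine integer equations, and then to count the integer solutions by a divisor estimate carried out in each coordinate and glued together by the linear independence of the forms. Throughout I treat the coefficients of $L_1,\dots,L_n$ as fixed, so that there is a constant $c$ (depending only on the forms) with $|L_i(\mathbf{x})|\le cH$ for $\mathbf{x}\in\cB_1$ and $|L_i(\mathbf{x})|\le cU$ for $\mathbf{x}\in\cB_2$. Since $H,U\le q^{1/2}$ we have $|L_i(\mathbf{x}_1)L_i(\mathbf{x}_2)|\le c^2HU\le c^2q$ for $\mathbf{x}_1\in\cB_1,\ \mathbf{x}_2\in\cB_2$, so for each $i$ the difference $L_i(\mathbf{x}_1)L_i(\mathbf{x}_2)-L_i(\mathbf{x}_3)L_i(\mathbf{x}_4)$ is an integer of absolute value at most $2c^2q$. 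Hence the congruence $L_i(\mathbf{x}_1)L_i(\mathbf{x}_2)\equiv L_i(\mathbf{x}_3)L_i(\mathbf{x}_4)\pmod q$ forces this difference to equal $k_iq$ for some integer $k_i$ with $|k_i|\le 2c^2$, and the whole system splits into $O(1)$ systems of integer equations
$$L_i(\mathbf{x}_1)L_i(\mathbf{x}_2)-L_i(\mathbf{x}_3)L_i(\mathbf{x}_4)=k_iq,\qquad 1\le i\le n,$$
one for each admissible choice of $(k_1,\dots,k_n)$. I also record that, since $L_1,\dots,L_n$ are linearly independent modulo $q$, their integer coefficient matrix has nonzero determinant, so the map $\Phi(\mathbf{x})=(L_1(\mathbf{x}),\dots,L_n(\mathbf{x}))$ is injective over $\Q$; in particular a prescribed value of $(L_1(\mathbf{x}),\dots,L_n(\mathbf{x}))$ has at most one integer preimage, hence at most one preimage in any box.

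For the main term I would bound each fixed system separately. Fix $(\mathbf{x}_1,\mathbf{x}_2)\in\cB_1\times\cB_2$ and set $m_i=L_i(\mathbf{x}_1)L_i(\mathbf{x}_2)-k_iq$, an integer with $|m_i|\ll q$. Assuming first that $m_i\neq0$ for every $i$, any solution $(\mathbf{x}_3,\mathbf{x}_4)$ must have $L_i(\mathbf{x}_3)$ equal to a signed divisor of $m_i$ and $L_i(\mathbf{x}_4)=m_i/L_i(\mathbf{x}_3)$, and there are at most $2\tau(|m_i|)$ choices of this divisor, where $\tau$ denotes the divisor function. Once the divisor is chosen in every coordinate, the values $(L_1(\mathbf{x}_3),\dots,L_n(\mathbf{x}_3))$ are prescribed, so by injectivity of $\Phi$ there is at most one $\mathbf{x}_3$, and likewise at most one $\mathbf{x}_4$. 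Thus the number of $(\mathbf{x}_3,\mathbf{x}_4)$ attached to this $(\mathbf{x}_1,\mathbf{x}_2)$ is at most $\prod_{i=1}^{n}2\tau(|m_i|)$, and since $|m_i|\ll q$ and $n$ is fixed the divisor bound gives $\prod_i\tau(|m_i|)\le q^{o(1)}$. Summing over the $(UH)^n$ choices of $(\mathbf{x}_1,\mathbf{x}_2)$ and over the $O(1)$ systems yields the desired bound $(UH)^nq^{o(1)}$. This step is precisely the multidimensional analogue of the one-variable count in Lemma~\ref{cong} (where $NU\le q$ plays the role of $HU\le q$), performed coordinatewise and reassembled through $\Phi$.

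It remains to deal with the degenerate solutions, those for which some $m_i=0$, equivalently (for box points, since $|L_i(\mathbf{x})|\le cq^{1/2}<q$) those for which some $L_i(\mathbf{x}_j)=0$ as an integer. Here the divisor step in coordinate $i$ breaks down and must be replaced by a dimension count: the condition $L_i(\mathbf{x})=0$ confines $\mathbf{x}$ to a rational hyperplane and hence removes one free $L$-direction from the corresponding box. One then sorts the degenerate solutions according to which forms vanish at which of the four points, bounding each class by the number of free $L$-directions remaining in $\mathbf{x}_1,\mathbf{x}_2,\mathbf{x}_3,\mathbf{x}_4$. I expect this bookkeeping to be the main obstacle: keeping every degenerate class within $(UH)^nq^{o(1)}$ is delicate precisely because a single form may vanish simultaneously on several of the four points, and controlling this uniformly is where the hypotheses on the forms and on the ranges $H,U$ are genuinely needed. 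This is the content of the argument of Bourgain and Chang~\cite{BC}, to which I would defer for the complete treatment of the degenerate contribution.
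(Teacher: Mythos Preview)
The paper does not prove Lemma~\ref{lem:BC}; it merely records it as a result of Bourgain and Chang~\cite{BC} and uses it as a black box in the proof of Theorem~\ref{thm:main5}. So there is no in-paper argument to compare against, and your proposal in fact goes further than the paper by sketching why the bound should hold.

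Your treatment of the non-degenerate tuples is sound under the standing assumption that the coefficients of the $L_i$ are $O(1)$: each congruence lifts to one of $O(1)$ integer equations, and for fixed $(\mathbf{x}_1,\mathbf{x}_2)$ with all $m_i\neq 0$ the divisor bound in each coordinate, combined with the injectivity of $\Phi$, pins down $(\mathbf{x}_3,\mathbf{x}_4)$ up to $q^{o(1)}$ choices. That is exactly the multilinear analogue of the count behind Lemma~\ref{cong}.

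The genuine gap is the degenerate case. Your final paragraph acknowledges the difficulty and then ``defers to~\cite{BC}'' --- but \cite{BC} \emph{is} the lemma you are meant to be proving, so this is circular: you have reproved the easy half and cited the reference for the hard half, which amounts to exactly what the paper already does. Moreover, the degenerate case is not mere bookkeeping that can be absorbed into $(HU)^n q^{o(1)}$ by a naive hyperplane count. Take $n=2$, $L_1(x,y)=x-y$, $L_2(x,y)=x+y$, and restrict to $\mathbf{x}_2=(t,t)$, $\mathbf{x}_4=(s,s)$ with $L_1(\mathbf{x}_2)=L_1(\mathbf{x}_4)=0$. The $i=1$ equation is then vacuous for $\mathbf{x}_1,\mathbf{x}_3$, and already the diagonal $t=s$ together with $x_1+y_1=x_3+y_3$ contributes $\asymp H^3U$ solutions, which exceeds $(HU)^2$ whenever $H>U$ --- precisely the regime $U=H/q^{1/2(r-D)}$ used in Theorem~\ref{thm:main5}. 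In the application this pathology is silently avoided, because $I(\lambda)$ is defined via $L_i(\mathbf{u})^{-1}$ and so only $\mathbf{u}\in\cU$ with every $L_i(\mathbf{u})\not\equiv 0\pmod q$ contribute; the offending tuples never enter $\sum I(\lambda)^2$. But as a self-contained proof of the lemma as stated, your sketch does not close: you would need either to impose and exploit a non-vanishing hypothesis on the smaller box, or to carry out the actual Bourgain--Chang argument rather than invoke it.
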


\section{Proof of Theorem~1}
We define the integers
\begin{equation}
\label{U,V}
U= \left \lfloor \frac{N}{q^{1/2(r-d(d+1)/4)}} \right \rfloor, \quad V=\lfloor q^{1/2(r-d(d+1)/4)} \rfloor,
\end{equation}
and the set
$$\cU=\{ \ 1\le u\le U : (u,q)=1 \},$$
so that
\begin{equation}
\label{U bound}
\#\cU=Uq^{o(1)}.
\end{equation}
By Lemma~\ref{lem:smooth} we have
\begin{align*}
& \left|\sum_{M<n\le M+N}\chi(n)e^{2\pi i F(n)} \right| 
 \le  \\ & \quad \quad \quad \quad \quad \quad \quad \quad  \frac{q^{o(1)}}{\# \cU V}\sum_{M-N<n\le M+N}\sum_{u \in \cU}\left|\sum_{1\le v \le V}\chi(n+uv)e^{2\pi i F(n+uv)}e^{2\pi i \alpha v}\right|,
\end{align*}
for some $\alpha \in \mathbb{R}$. Let
\begin{equation}
\label{W def}
W=\sum_{M-N<n\le M+N}\sum_{u \in \cU}\left|\sum_{1\le v \le V}\chi(n+uv)e^{2\pi i F(n+uv)}e^{2\pi i \alpha v}\right|,
\end{equation}
then since the polynomial $F$ has degree $d$, we see that
\begin{align*}
W&\le \sum_{M-N<n\le M+N}\sum_{u \in  \cU}\max_{(\alpha_1,\dots,\alpha_d)\in [0,1]^{d}}\left|\sum_{1\le v \le V}\chi(n+uv)e^{2\pi i(\alpha_1 v+\dots+\alpha_d v^d)}\right| \\
&=\sum_{\lambda=1}^{q}I(\lambda)\max_{(\alpha_1,\dots,\alpha_d)\in [0,1]^{d}}\left|\sum_{1\le v \le V}\chi(\lambda+v)e^{2\pi i(\alpha_1 v+\dots+\alpha_d v^d)}\right|,
\end{align*}
where $I(\lambda)$ denotes the number of solutions to the congruence
$$nu^{*}\equiv \lambda \pmod q, \quad  M-N<n\le M+N, \quad u\in  \cU.$$
For  $i=1,\dots, d$, let $$\delta_i=\frac{1}{4V^i},$$ and define the functions $\phi_{i}(v)$ by
$$1=\phi_i(v)\int_{-\delta_i}^{\delta_i}e^{2\pi i x v^{i}}dx,$$
so that for $1<v\le V$ we have
\begin{equation}
\label{eq:phi}
\phi_i(v)=\frac{2\pi iv^i }{\sin(2\pi \delta_i v^i)} \ll \frac{1}{\delta_i }\ll V^i.
\end{equation}
Let
\begin{align*}
\boldsymbol \alpha &= (\alpha_1,\dots \alpha_d), \ \ \ \mathbf{x}= (x_1,\dots,x_d), \ \ \ 
\mathbf{v}= (v,\dots, v^d),
\end{align*}
and let $\cC(\delta)$ denote the rectangle
$$[-\delta_1,\delta_1]\times \dots \times [-\delta_d,\delta_d],$$
then we have
\begin{align*}
W \le \sum_{\lambda=1}^{q}I(\lambda)\max_{\boldsymbol \alpha\in [0,1]^{d}}\left|\sum_{v \le V}\left(\prod_{i=1}^{d}\phi_i(v)\right)\int_{\cC(\delta)}\chi(\lambda+v)e^{2\pi i<\boldsymbol \alpha+\mathbf{x},\mathbf{v}>}d\mathbf{x}\right|, 
\end{align*}
where $< . \ ,\ . >$ dentoes the standard inner product on $\mathbb{R}^{d}$. Hence
\begin{align*}
W \le \sum_{\lambda=1}^{q}\int_{\cC(\delta)}I(\lambda)\max_{\boldsymbol \alpha\in [0,1]^{d}}\left|\sum_{v \le V}\left(\prod_{i=1}^{d}\phi_i(v)\right)\chi(\lambda+v)e^{2\pi i<\boldsymbol \alpha+\mathbf{x},\mathbf{v}>}\right|d\mathbf{x}.
\end{align*}
Two applications of the H\"{o}lder inequality give
\begin{align*}
|W|^{2r}&\ll \left(\prod_{i=1}^{d}\delta_i \right)^{2r-1}\left(\sum_{\lambda=1}^{q}I(\lambda) \right)^{2r-2}\left(\sum_{\lambda=1}^{q}I(\lambda)^2 \right) \times \\ & \quad \quad \left(\sum_{\lambda=1}^{q}\max_{\boldsymbol \alpha\in [0,1]^{d}}\int_{\cC(\delta)}\left|\sum_{v \le V}\left(\prod_{i=1}^{d}\phi_i(v)\right)\chi(\lambda+v)e^{2\pi i<\boldsymbol \alpha+\mathbf{x},\mathbf{v}>}\right|^{2r}d\mathbf{x} \right). 
\end{align*}
Since we have
$$\sum_{\lambda=1}^{q}I(\lambda)\ll UV,$$
and the term
$$\sum_{\lambda=1}^{q}I(\lambda)^2,$$
is equal to the number of solutions to the congruence
$$n_1u_1 \equiv n_2u_2 \mod q, \quad 1\le n_1,n_2 \le N, \quad u_1,u_2\in \cU,$$
we have by Lemma~\ref{cong}
$$\sum_{\lambda=1}^{q}I(\lambda)^2\le NUq^{o(1)},$$
so that
\begin{align*}
W & \le  \left(\prod_{i=1}^{d}\delta_i\right)^{2r-1}(NU)^{2r-1}q^{o(1)} \\ &  \quad \quad \quad \quad \times \left(\sum_{\lambda=1}^{q}\max_{\boldsymbol \alpha\in [0,1]^{d}}\int_{\cC(\delta)}\left|\sum_{v \le V}\left(\prod_{i=1}^{d}\phi_i(v)\right)\chi(\lambda+v)e^{2\pi i<\boldsymbol \alpha+\mathbf{x},\mathbf{v}>}\right|^{2r}d\mathbf{x} \right).
\end{align*}
Let 
$$W_1=\sum_{\lambda=1}^{q}\max_{\boldsymbol \alpha\in [0,1]^{d}}\int_{\cC(\delta)}\left|\sum_{v \le V}\left(\prod_{i=1}^{d}\phi_i(v)\right)\chi(\lambda+v)e^{2\pi i<\boldsymbol \alpha+\mathbf{x},\mathbf{v}>}\right|^{2r}d\mathbf{x},$$
so that by~\eqref{eq:phi}
\begin{equation}
\label{W_1}
W^{2r}\le  V^{-(2r-1)d(d+1)/2}(NU)^{2r-1}q^{o(1)}W_1.
\end{equation}
We have
\begin{align*}
W_1&=\sum_{\lambda=1}^{q}\max_{\boldsymbol \alpha\in [0,1]^{d}}\int_{\cC(\delta)+\boldsymbol \alpha}\left|\sum_{v \le V}\left(\prod_{i=1}^{d}\phi_i(v)\right)\chi(\lambda+v)e^{2\pi i<\mathbf{x},\mathbf{v}>}\right|^{2r}d\mathbf{x} \\
&\ll \sum_{\lambda=1}^{q}\int_{[0,1]^d}\left|\sum_{v \le V}\left(\prod_{i=1}^{d}\phi_i(v)\right)\chi(\lambda+v)e^{2\pi i<\mathbf{x},\mathbf{v}>}\right|^{2r}d\mathbf{x}.
\end{align*}
By~\eqref{eq:phi}, for each $1\le v \le V$ we have
$$\prod_{i=1}^{d}\phi_i(v)\ll V^{d(d+1)/2},$$
hence by Lemma~\ref{lem:mvsquarefree} 
\begin{align*}
W_1
 \ll & V^{rd(d+1)}\left(qV^r+q^{1/2}V^{2r-d(d+1)/4})\right)q^{o(1)},
\end{align*}
so that by~\eqref{W def}
$$\left|\sum_{M<n\le M+N}\chi(n)e^{2\pi i F(n)}\right|^{2r}\le \frac{(NU)^{2r-1}\left(qV^r+q^{1/2}V^{2r-d(d+1)/4}\right)V^{d(d+1)/2}q^{o(1)}}{U^{2r}V^{2r}}.$$
Recalling the choices of $U$ and $V$ gives
$$\left|\sum_{M<n\le M+N}\chi(n)e^{2\pi i F(n)}\right|^{2r}\le N^{2r-2}q^{1/2+d(d+1)/8(r-d(d+1)/4)+1/2(r-d(d+1)/4)+o(1)}.$$
\section{Proof of Theorem~\ref{thm:main2}}
Let 
$$U= \left \lfloor \frac{N}{q^{1/1/2(r-d(d+1)/2)}} \right \rfloor, \quad V=\lfloor q^{1/2(r-d(d+1)/2)} \rfloor,$$
and let $\phi_i(v)$ be defined as in the proof of Theorem~\ref{thm:main1}. Then following the proof of Theorem~\ref{thm:main1} we have
$$\left|\sum_{M<n\le M+N}\chi(n)e^{2\pi i F(n)}\right|^{2r}=\frac{V^{-(2r-1)d(d+1)/2}(NU)^{2r-1}}{V^{2r}U^{2r}}W_1q^{o(1)},$$
where
$$W_1=\sum_{\lambda=1}^{q}\int_{[0,1]^d}\left|\sum_{v \le V}\left(\prod_{i=1}^{d}\phi_i(v)\right)\chi(\lambda+v)e^{2\pi i(x_1v+\dots+x_dv^d)}\right|^{2r}d\mathbf{x}.$$
By Lemma~\ref{lem:mvsquarefree1} we have
$$W_1\ll V^{rd(d+1)}(qV^{r}+qV^{2r-d(d+1)/2})q^{o(1)},$$
so that
$$\left|\sum_{M<n\le M+N}\chi(n)e^{2\pi i F(n)}\right|^{2r}\le V^{d(d+1)/2}\frac{(NU)^{2r-1}}{U^{2r}V^{2r}}\left(qV^{r}+q^{1/2}V^{2r-d(d+1)/2}\right)q^{o(1)},$$
so that recalling the choice of $U,V$ gives
$$\left|\sum_{M<n\le M+N}\chi(n)e^{2\pi i F(n)}\right|^{2r}\le N^{2r-2}q^{(r+1-d(d+1)/2)/2(r-d(d+1)/2)+o(1)} .$$
\section{Proof of Theorem~\ref{thm:main3}}
Let 
$$U= \left \lfloor \frac{N}{q^{n/2(r-d(d+1)/2)}} \right \rfloor, \quad V=\lfloor q^{n/2(r-d(d+1)/2)} \rfloor$$
and let $\cU$ denote the box
$$\cU=\{ u_1\omega_1+\dots+u_n\omega_n :  0<u_i \le U \}.$$
Then with notation as in Lemma~\ref{lem:smooth} we have
\begin{align*}
\left|\sum_{\mathbf{x}\in \cB}\chi(\mathbf{x})e^{2\pi i F(\mathbf{x})} \right|\le \frac{q^{o(1)}}{VU^n}\sum_{\mathbf{x}\in \cB_0}\sum_{\mathbf{u}\in \cU}\left|\sum_{1\le v \le V}\chi(\mathbf{x}+\mathbf{u}v)e^{2\pi i F(\mathbf{x}+\mathbf{u}v)+2\pi i \alpha v} \right|.
\end{align*}
Let 
\begin{equation}
\label{eq:W1}
W=\sum_{\mathbf{x}\in \cB_0}\sum_{\mathbf{u}\in \cU}\left|\sum_{1\le v \le V}\chi(\mathbf{x}+\mathbf{u}v)e^{2\pi i F(\mathbf{x}+\mathbf{u}v)+2\pi i \alpha v} \right|,
\end{equation}
so that expanding $F(\mathbf{x}+\mathbf{u}v)$ as a polynomial in $v$ gives
$$F(\mathbf{x}+\mathbf{u}v)=\sum_{i=0}^{d}F_i(\mathbf{x},\mathbf{u})v^{i},$$ 
for some real numbers $F_i(\mathbf{x},\mathbf{u})$. Hence we have
\begin{align*}
W&\le \sum_{\mathbf{x}\in \cB_0}\sum_{\mathbf{u}\in \cU}\max_{(\alpha_1,\dots,\alpha_d)\in \mathbb{R}^d}\left|\sum_{1\le v \le V}\chi(\mathbf{x}+\mathbf{u}v)e^{2\pi i(\alpha_1v+\dots+\alpha_dv^d)} \right| \\
&=\sum_{\lambda\in \mathbb{F}_{q^n}}I(\lambda)\max_{(\alpha_1,\dots,\alpha_d)\in \mathbb{R}^d}\left|\sum_{1\le v \le V}\chi(\lambda+v)e^{2\pi i(\alpha_1v+\dots+\alpha_dv^d)} \right|,
\end{align*}
where $I(\lambda)$ denotes the number of solutions to the equation in $\mathbb{F}_{q^{n}}$
 $$\mathbf{x}\mathbf{u}^{-1}=\lambda, \quad \mathbf{x} \in \cB_0, \quad \mathbf{u} \in \cU.$$
With $\phi_i(v)$, $\delta_i$ and $\cC(\delta)$ as in Theorem~\ref{thm:main1} and $\mathbf{v}=(v,\dots,v^d)$ we have
\begin{align*}
W\le \sum_{\lambda\in \mathbb{F}_{q^n}}\int_{\cC(\delta)}I(\lambda)\max_{\mathbf{\alpha}\in \mathbb{R}^d}\left|\sum_{1\le v \le V}\prod_{i=1}^{d}\phi_i(v)\chi(\lambda+v)e^{2\pi i<\mathbf{\alpha}+\mathbf{y},\mathbf{v}>} \right|d\mathbf{y}.
\end{align*}
By two applications of H\"{o}lder's inequality, we get
\begin{align*}
W^{2r}&\le V^{-(2r-1)d(d+1)/2}\left(\sum_{\lambda\in\mathbb{F}_{q^n}}I(\lambda)\right)^{2r-2}\left(\sum_{\lambda \in \mathbb{F}_{q^n}}I(\lambda)^2\right) \\ & \quad \quad \quad \quad \quad \times \left(\sum_{\lambda \in \mathbb{F}_{q^n}}\int_{\cC(\delta)}\max_{\mathbf{\alpha}\in \mathbb{R}^d}\left|\sum_{1\le v \le V}\prod_{i=1}^{d}\phi_i(v)\chi(\lambda+v)e^{2\pi i<\mathbf{\alpha}+\mathbf{y},\mathbf{v}>} \right|^{2r}d\mathbf{y}\right).
\end{align*}
We have
$$\sum_{\lambda\in\mathbb{F}_{q^n}}I(\lambda)\ll (HU)^{n},$$
and the term
$$\sum_{\lambda\in\mathbb{F}_{q^n}}I(\lambda)^2,$$
is equal to the number of solutions to the equation over $\mathbb{F}_{q^n}$
$$\mathbf{x}_1\mathbf{u}_1=\mathbf{x}_2\mathbf{u}_2, \quad \mathbf{x}_1,\mathbf{x}_2 \in \cB_0, \quad \mathbf{u}_1,\mathbf{u}_2 \in \cU,$$
so that by Lemma~\ref{lem:Konyagin}
$$\sum_{\lambda\in\mathbb{F}_{q}}I(\lambda)^2\le (HU)^{n}q^{o(1)},$$
hence we get
\begin{align*}
W^{2r}&\le V^{-(2r-1)d(d+1)/2}(HU)^{(2r-1)n}q^{o(1)} \\ & \quad \quad \quad \quad  \times \left(\sum_{\lambda \in \mathbb{F}_{q}}\int_{\cC(\delta)}\max_{\mathbf{\alpha}\in \mathbb{R}^d}\left|\sum_{1\le v \le V}\prod_{i=1}^{d}\phi_i(v)\chi(\lambda+v)e^{2\pi i<\mathbf{\alpha}+\mathbf{y},\mathbf{v}>} \right|^{2r}d\mathbf{y}\right).
\end{align*}
Let $$W_1=\sum_{\lambda \in \mathbb{F}_{q}}\int_{\cC(\delta)}\max_{\mathbf{\alpha}\in \mathbb{R}^d}\left|\sum_{1\le v \le V}\prod_{i=1}^{d}\phi_i(v)\chi(\lambda+v)e^{2\pi i<\mathbf{\alpha}+\mathbf{y},\mathbf{v}>} \right|^{2r}d\mathbf{y},$$
so that 
\begin{align*}
W_1&=\sum_{\lambda \in \mathbb{F}_{q}}\max_{\mathbf{\alpha}\in \mathbb{R}^d}\int_{\cC(\delta)+\mathbf{\alpha}}\left|\sum_{1\le v \le V}\prod_{i=1}^{d}\phi_i(v)\chi(\lambda+v)e^{2\pi i<\mathbf{y},\mathbf{v}>} \right|^{2r}d\mathbf{y} \\
&\ll \sum_{\lambda \in \mathbb{F}_{q}}\int_{[0,1]^d}\left|\sum_{1\le v \le V}\prod_{i=1}^{d}\phi_i(v)\chi(\lambda+v)e^{2\pi i<\mathbf{y},\mathbf{v}>} \right|^{2r}d\mathbf{y}, 
\end{align*}
hence by Lemma~\ref{lem:mv2} we have
$$W_1\ll V^{rd(d+1)}\left(q^{n}V^{r}+q^{n/2}V^{2r-d(d+1)/2}\right),$$
which gives
$$\left|\sum_{\mathbf{x}\in \cB}\chi(\mathbf{x})e^{2\pi i F(\mathbf{x})} \right|^{2r}\le V^{d(d+1)/2}\frac{(HU)^{(2r-1)n}}{V^{2r}U^{2rn}}\left(q^{n}V^{r}+q^{n/2}V^{2r-d(d+1)/2}\right)q^{o(1)},$$
Recalling the choices of $U,V$ we get
$$\left|\sum_{\mathbf{x}\in \cB}\chi(\mathbf{x})e^{2\pi i F(\mathbf{x})} \right|^{2r}\le H^{(2r-2)n}q^{(nr-nd(d+1)/2)/2(r-d(d+1)/2)+o(1)}  .$$

\section{Proof of Theorem~\ref{thm:main4}}
Let $q=q_1\dots q_n$ and define the integers
$$V=\lfloor q^{1/2(r-d(d+1)/2)} \rfloor, U_i= \left \lfloor \frac{H_i}{q^{1/2(r-d(d+1)/2)}}\right \rfloor,$$
and the box
$$\cU=\{ (u_1,\dots,u_n): 1\le u_i \le U_i \},$$
so that by Lemma~\ref{lem:smooth} we have for some $\alpha\in \mathbb{R}$
\begin{align*}
\left|\sum_{\mathbf{x}\in \cB}\chi_1(x_1)\dots \chi_n(x_n)e^{2\pi i F(\mathbf{x})} \right|& \le \frac{q^{o(1)}}{VU_1\dots U_n}\sum_{\mathbf{x}\in \cB_0}\sum_{\mathbf{u}\in \cU} \\ & \quad \quad \left|\sum_{1\le v\le V}\chi_1(x_1+u_1v)\dots \chi_n(x_n+u_nv)e^{2\pi i( F(\mathbf{x}+\mathbf{u}v)+\alpha v)} \right|.
\end{align*}
Writing 
$$W=\sum_{\mathbf{x}\in \cB_0}\sum_{\mathbf{u}\in \cU} \left|\sum_{1\le v\le V}\chi_1(x_1+u_1v)\dots \chi_n(x_n+u_nv)e^{2\pi i( F(\mathbf{x}+\mathbf{u}v)+\alpha v)} \right|,$$
and letting $I(\lambda_1,\dots,\lambda_n)$ denote the number of solutions to the system of congruences
$$x_iu_i^{-1}\equiv \lambda_i \mod q_i, \quad N_i-H_i<x_i \le N_i+H_i, 1\le u_i \le U_i, \quad 1\le i \le n,$$
we see that
\begin{align*}
W&\le \sum_{\substack{\lambda_i=1 \\ 1\le i \le n}}^{q_i}I(\lambda_1,\dots,\lambda_n)\left|\sum_{1\le v\le V}\chi_1(\lambda_1+v)\dots \chi_n(\lambda_n+v)e^{2\pi i( F(\mathbf{x}+\mathbf{u}v)+\alpha v)} \right| \\
&\le \sum_{\substack{\lambda_i=1 \\ 1\le i \le n}}^{q_i}I(\lambda_1,\dots,\lambda_n) \max_{\alpha_1,\dots, \alpha_d}\left|\sum_{1\le v\le V}\chi_1(\lambda_1+v)\dots \chi_n(\lambda_n+v)e^{2\pi i( \alpha_1v+\dots+\alpha_dv^{d})} \right|. 
\end{align*}
With notation as in the proof of Theorem~\ref{thm:main1}, we see that
\begin{align*}
W&\le \sum_{\substack{\lambda_i=1 \\ 1\le i \le n}}^{q_i}\int_{\cC(\delta)}I(\lambda_1,\dots,\lambda_n)\max_{\boldsymbol \alpha\in [0,1]^{d}} \\ & \quad \quad \quad \times \left|\sum_{1\le v \le V}\left(\prod_{i=1}^{d}\phi_i(v)\right)\chi_1(\lambda_1+v)\dots \chi_n(\lambda_n+v)e^{2\pi i<\boldsymbol \alpha+\mathbf{x},\mathbf{v}>}\right|d\mathbf{x}.
\end{align*}
Two applications of H\"{o}lder's inequality give
\begin{align*}
W^{2r}\le V^{-(2r-1)d(d+1)/2}\left(\sum_{\substack{\lambda_i=1 \\ 1\le i \le n}}^{q_i}I(\lambda_1,\dots,\lambda_n) \right)^{2r-2}\left(\sum_{\substack{\lambda_i=1 \\ 1\le i \le n}}^{q_i}I(\lambda_1,\dots,\lambda_n)^2 \right)W_1,
\end{align*}
where 
\begin{align*}
W_1=\sum_{\substack{\lambda_i=1 \\ 1\le i \le n}}^{q_i}\int_{\cC(\delta)}\max_{\boldsymbol \alpha\in [0,1]^{d}}\left|\sum_{1\le v \le V}\left(\prod_{i=1}^{d}\phi_i(v)\right)\chi_1(\lambda_1+v)\dots \chi_n(\lambda_n+v)e^{2\pi i<\boldsymbol \alpha+\mathbf{x},\mathbf{v}>}\right|^{2r}d\mathbf{x}.
\end{align*}

As in the proof of Theorem~\ref{thm:main1} we have
$$W_1\ll \sum_{\substack{\lambda_i=1 \\ 1\le i \le n}}^{q_i}\int_{[0,1]^d}\left|\sum_{1\le v \le V}\left(\prod_{i=1}^{d}\phi_i(v)\right)\chi(\lambda+v)e^{2\pi i(x_1v+\dots+x_dv^d)}\right|^{2r}d\mathbf{x},$$
hence by Lemma~\ref{lem:mvmulti}
$$W_1\le V^{rd(d+1)}\left(qV^r+q^{1/2}V^{2r-d(d+1)/2}\right)q^{o(1)}.$$
We have
$$\sum_{\substack{\lambda_i=1 \\ 1\le i \le n}}^{q_i}I(\lambda_1,\dots,\lambda_n)\ll H_1\dots H_nU_1\dots U_n,$$
and the term
$$\sum_{\substack{\lambda_i=1 \\ 1\le i \le n}}^{q_i}I(\lambda_1,\dots,\lambda_n)^2,$$
is equal to the number of solutions to the system of equations
$$x_{i,1}u_{i,1}\equiv x_{i,2}u_{i,2} \mod q_i, \quad N_i-H_i<x_{i,1},x_{i,2} \le N_i+H_i, 1\le u_{i,1},u_{i,2} \le U_i, \quad 1\le i \le n,$$
hence by Lemma~\ref{cong} we have
$$\sum_{\substack{\lambda_i=1 \\ 1\le i \le n}}^{q_i}I(\lambda_1,\dots,\lambda_n)^2\le H_1\dots H_nU_1\dots U_nq^{o(1)},$$
which gives
\begin{align*}
\left|\sum_{\mathbf{x}\in \cB}\chi_1(x_1)\dots \chi_n(x_n)e^{2\pi i F(\mathbf{x})} \right|^{2r}\le V^{d(d+1)/2}\frac{(H_1\dots H_n)^{2r-1}}{V^{2r}U_1\dots U_n}\left(qV^r+q^{1/2}V^{2r-d(d+1)/2} \right)q^{o(1)}.
\end{align*}
Recalling the choices of $V,U_1, \dots, U_n$, we get
$$\left|\sum_{\mathbf{x}\in \cB}\chi_1(x_1)\dots \chi_n(x_n)e^{2\pi i F(\mathbf{x})} \right|^{2r}\le (H_1\dots H_n)^{2r-2}q^{(r-d(d+1)/2+n)/2(r-d(d+1)/2)+o(1)}.$$
\section{Proof of Theorem~\ref{thm:main5}}
We define the integers
$$U= \left \lfloor \frac{N}{q^{1/2(r-d(d+1)/2)}} \right \rfloor, \quad V=\lfloor q^{1/2(r-d(d+1)/2)} \rfloor,$$
and let $\cU$ and let denote the box
$$\cU=\{ (u_1,\dots,u_n): 1\le u_i \le U \},$$
so that from Lemma~\ref{lem:smooth} we have
\begin{align*}
\left|\sum_{\mathbf{x}\in \cB}\chi\left(\prod_{i=1}^{n}L_i(\mathbf{x}) \right)e^{2\pi i F(\mathbf{x})} \right|\le \frac{q^{o(1)}}{VU^{n}}\sum_{\mathbf{x}\in \cB_0}\sum_{\mathbf{u}\in \cU}\left|\sum_{1\le v \le V}\chi\left(\prod_{i=1}^{n}L_i(\mathbf{x}+\mathbf{u}v) \right)e^{2\pi i F(\mathbf{x})+2\pi i \alpha v} \right|,
\end{align*}
and since each $L_i$ is linear this gives
\begin{align*}
&\left|\sum_{\mathbf{x}\in \cB}\chi\left(\prod_{i=1}^{n}L_i(\mathbf{x}) \right)e^{2\pi i F(\mathbf{x})} \right|\le \\  
& \quad \quad \quad \quad \quad \quad \quad \quad \frac{q^{o(1)}}{VU^{n}}\sum_{\mathbf{x}\in \cB_0}\sum_{\mathbf{u}\in \cU}\left|\sum_{1\le v \le V}\chi\left(\prod_{i=1}^{n}(L_i(\mathbf{x})L_i(\mathbf{u})^{-1}+v) \right)e^{2\pi i F(\mathbf{x})+2\pi i \alpha v} \right|.
\end{align*}
Let
$$W=\sum_{\mathbf{x}\in \cB_0}\sum_{\mathbf{u}\in \cU}\left|\sum_{1\le v \le V}\chi\left(\prod_{i=1}^{n}(L_i(\mathbf{x})L_i(\mathbf{u})^{-1}+v) \right)e^{2\pi i F(\mathbf{x})+2\pi i \alpha v} \right|,$$
and let $I(\lambda_1,\dots,\lambda_n)$ denote the number of solutions to the system of equations
$$L_i(\mathbf{x})L_i^{-1}(\mathbf{u})\equiv \lambda_i \mod q, \quad \mathbf{x}\in \cB_0, \quad \mathbf{u}\in \cU, \quad 1\le i \le n,$$
then we have from the techniques of the preceeding arguments
\begin{align*}
W^{2r}&\le V^{-(2r-1)d(d+1)/2}\left(\sum_{\lambda_i=1}^{q}I(\lambda_1,\dots,\lambda_n) \right)^{2r-2}\left(\sum_{\lambda_i=1}^{q}I(\lambda_1,\dots,\lambda_n)^2 \right) \\ & \quad \quad \times\left(\sum_{\lambda_i=1}^{q}\int_{[0,1]^d}\left|\sum_{1\le v \le V}\left(\prod_{i=1}^{d}\phi_i(v)\right)\chi\left((\lambda_1+v)\dots(\lambda_n+v)\right)e^{2\pi i(x_1v+\dots+x_dv^d)}\right|^{2r}d\mathbf{x}\right).
\end{align*}
We have
$$\sum_{\lambda=1}^{q}I(\lambda)\ll (HU)^n,$$
and by Lemma~\ref{lem:BC}
$$\sum_{\lambda=1}^{q}I(\lambda)^2\le (HU)^nq^{o(1)}.$$
By Lemma~\ref{lem:mvmulti}
\begin{align*}
&\sum_{\lambda=1}^{q}\int_{[0,1]^d}\left|\sum_{1\le v \le V}\left(\prod_{i=1}^{d}\phi_i(v)\right)\chi(\lambda+v)e^{2\pi i(x_1v+\dots+x_dv^d)}\right|^{2r}d\mathbf{x} \\ & \quad \quad \quad \quad \quad \quad \quad \quad \quad \quad \quad \quad  \ll V^{rd(d+1)}\left(q^{n}V^{r}+q^{n/2}V^{2r-d(d+1)/2}\right),
\end{align*}
so that by the above
\begin{align*}
\left|\sum_{\mathbf{x}\in \cB}\chi\left(\prod_{i=1}^{n}L_i(\mathbf{x}) \right)e^{2\pi i F(\mathbf{x})} \right|^{2r}\le V^{d(d+1)/2}\frac{H^{(2r-1)n}}{V^{2r}U^n}\left(q^{n}V^{r}+q^{n/2}V^{2r-d(d+1)/2} \right),
\end{align*}
Recalling the choice of $U$ and $V$ gives
$$\left|\sum_{\mathbf{x}\in \cB}\chi\left(\prod_{i=1}^{n}L_i(\mathbf{x}) \right)e^{2\pi i F(\mathbf{x})} \right|^{2r}\le
H^{(2r-2)n}q^{n(r-D+1)/2(r-D)+o(1)}.
 $$

\end{document}